\colorlet{green}{black!30!green} 
\tikzstyle directed=[postaction={decorate,decoration={markings,
    mark=at position #1 with {\arrow{>}}}}]
\tikzstyle rdirected=[postaction={decorate,decoration={markings,
    mark=at position #1 with {\arrow{<}}}}]
\tikzset{anchorbase/.style={baseline={([yshift=-0.5ex]current bounding box.center)}},
  arrowinthemiddle/.style={postaction=decorate,decoration={markings,mark=at position 0.5 with {\arrow{>}}}},
arrowinthemiddlerev/.style={postaction=decorate,decoration={markings,mark=at position 0.5 with {\arrow{<}}}},
cross line/.style={preaction={draw=white,line width=4pt,-}},
int/.style={very thick},
zero/.style={thin,dotted},
uno/.style={thin},
aboverotated/.style={above,rotate=60,anchor=west},
belowrotated/.style={below,rotate=60,anchor=east}}
\newcommand{\midarrow}{node[midway,sloped,allow upside down] {\hspace{0.05cm}\tikz[baseline=0] \draw[->] (0,0) -- +(.001,0);}}
\newcommand{\midarrowrev}{node[midway,sloped,allow upside down] {\hspace{0.05cm}\tikz[baseline=0] \draw[-<] (0,0) -- +(.001,0);}}
\newcommand{\nendarrow}{node[near end,sloped,allow upside down] {\hspace{0.05cm}\tikz[baseline=0] \draw[->] (0,0) -- +(.001,0);}}
\newcommand{\nendarrowrev}{node[near end,sloped,allow upside down] {\hspace{0.05cm}\tikz[baseline=0] \draw[-<] (0,0) -- +(.001,0);}}
\newcommand{\startarrowrev}{node[at start,sloped,allow upside down] {\hspace{0.05cm}\tikz[baseline=0] \draw[-<] (0,0) -- +(.001,0);}}
\tikzset{smallnodes/.style={every node/.style={font=\footnotesize}}}
\newcommand{\C}{\mathbb{C}}
\newcommand{\R}{\mathbb{R}}
\newcommand{\Z}{\mathbb{Z}}
\newcommand{\N}{\mathbb{N}}
\newcommand{\fieldk}{\mathbb{k}}
\newcommand{\Ss}{\mathbb{S}}
\newcommand{\gl}{\mathfrak{gl}}
\DeclareMathOperator{\Hom}{Hom}
\newcommand{\abs}[1]{\left|#1\right|}
\newcommand{\mapto}{\rightarrow}
\newcommand{\counit}{\boldsymbol{\mathrm{u}}}
\newcommand{\uBr}{\mathsf{Br}}
\newcommand{\sln}{\mathfrak{sl}_n}
\newcommand{\slnn}[1]{\mathfrak{sl}_{#1}}
\newcommand{\catRep}{{\mathsf{Rep}}}
\newcommand{\calH}{{\mathcal{H}}}
\newcommand{\calS}{{\mathcal{S}}}
\newcommand{\1}{\boldsymbol{1}}
\DeclareMathOperator{\End}{End}
\DeclareMathOperator{\Kar}{Kar}
\newtheorem{theorem}{Theorem}
\newtheorem{definition}[theorem]{Definition}
\newtheorem{example}[theorem]{Example}
\newtheorem{proposition}[theorem]{Proposition}
\newtheorem{corollary}[theorem]{Corollary}
\newtheorem{remark}[theorem]{Remark}
\newtheorem{lemma}[theorem]{Lemma}
\newcommand{\catTangles}{{\mathsf{Tangles}}}
\begin{document}
%

\title
    {
      Polynomial link invariants and quantum algebras
}

\date{}

\author{Hoel Queffelec}
\address{IMAG, U. Montpellier, CNRS, Montpellier, France}
\email{hoel.queffelec@umontpellier.fr}

\begin{abstract}
The definition of the Jones polynomial in the 80's gave rise to a large family of so-called quantum link invariants, based on quantum groups. These quantum invariants are all controlled by the same two-variable invariant (the HOMFLY-PT polynomial), which also specializes to the older Alexander polynomial.
Building upon quantum Schur--Weyl duality and variants of this phenomenon, I will explain an algebraic setup that allows for global definitions of these quantum polynomials, and discuss extensions of these quantum objects designed to encompass all of the mentioned invariants, including the HOMFLY-PT polynomial.
\end{abstract}

\maketitle

\section*{Introduction}

A large part of link theory consisted and still consists in the development and study of link invariants: these are functions, typically defined on knot diagrams, that only depend on the isotopy class of the knot on which they are evaluated and not on the particular choice of representative used to compute the result. This ensures that two knot diagrams for which a given invariant takes different values represent genuinely different knots.

Polynomial knot invariants have played a central historical role, and can often be linked to two main families that arose either from the Alexander polynomial \cite{Alexander} or from the Jones polynomial \cite{Jones}. Both of these invariants can be obtained by specialization of a two-parameter more general invariant called the HOMFLY-PT polynomial \cite{HOMFLY,PT}.

The purpose of these notes is to explain and illustrate the relationship between these invariants and several objects that appear in quantum algebra. For the Jones polynomial, this relationship is basically the starting point of quantum topology and was present almost from the very beginning, through the work of Reshetikhin, Turaev and Witten \cite{RT,Witten3}. Indeed, Reshetikhin and Turaev reformulated Jones construction by assigning to a tangle an intertwiner in some category of representations of a special linear quantum group. In the case of a knot, one obtains an endomorphism of the trivial object in the category. Such an endomorphism is a scalar multiple of the identity, and this scalar is the Jones polynomial, or generalizations of it, depending on the choice of the quantum group.

Although the Alexander polynomial was at first defined in a rather different context, quantum reformulations of it have been known for some time now~\cite{ViroAlexander,KauffmanSaleur,MurakamiAlexander} (see also \cite{SarAlexander} for a concise treatment of it). The HOMFLY-PT polynomial, on the other hand, does not enjoy a direct translation in quantum terms, or at least not by the use of intertwiners for a certain quantum group.

After presenting the different invariants that will play a role here, first diagrammatically and then in relation with the theory of representations of $U_q(\gl_{m|n})$, I will recover from these constructions two quantum algebras (the Hecke algebra and the quantum Schur algebra) and try to convey the idea that they can serve as algebraic rigidifying tools, or in other words better replacements, for the categories of representations. Furthermore, these objects, contrarily to the categories of representations, can be extended to encompass the HOMFLY-PT polynomial as well, which I'll briefly illustrate at the end of these notes.

\noindent {\bf Organization:} Section~\ref{sec:knots} is devoted to brief definitions of the notions of knots, links and tangles, and to their invariants. These invariants are defined using the so-called quantized oriented Brauer category. Section~\ref{sec:RT} relates this to the original Reshetikhin-Turaev approach by defining a functor between the quantized oriented Brauer category and the category of representations of the quantum group $U_q(\gl_{m|n})$. Finally, Section~\ref{sec:SW} is devoted to Schur--Weyl and skew Howe dualities and the role they recently played in knot theory.

The presentation given in these notes is far from being historically accurate: I have rather tried to define all invariants in a unified way and use this definition to recover several quantum constructions. In many situations, the story actually went the other way around. However, I hope that the presentation I chose to follow will make the relations and the central role that quantum algebras play in this picture more apparent, and that these notes can serve as a motivation and a general illustration before reading more involved references.

\noindent {\bf Acknowledgements:} These lecture notes follow a lecture series given at Winterbraids IX in Reims in March 2019. I would like to warmly thank the organizers (Paolo Bellingeri, Vincent Florens, Jean-Baptiste Meilhan, Lo\"ic Poulain d'Andecy, Emmanuel Wagner) for their invitation, their support, and more generally for putting together these great winter schools year after year. Many thanks also to Antonio Sartori for his comments on a preliminary version of these notes and for teaching me most of what's now in these notes, and to the anonymous referee for her/his helpful comments.

\section{Knots, links, tangles and their invariants} \label{sec:knots}

\subsection{Knots and links}

Here I briefly and informally recall the notions of knot, link and tangle. Precise definitions and detailed discussions can be found for example in~\cite{BurdeZieschang,Cromwell,Rolfsen}.

A knot is an embedding of the circle: $\Ss^1\hookrightarrow \R^3$, up to ambient isotopy. A link is an embedding of a finite number of copies of the circle: $\cup_{k} \Ss^1\hookrightarrow \R^3$, also considered up to global isotopy (thus the empty link is a link and a knot is a link). Choosing a direction of projection, one can generically represent a knot by a diagram, as in the following example.

\begin{example}
\[
\begin{tikzpicture}[anchorbase,scale=.8]
  \begin{knot}[
            clip width=7,
            consider self intersections,
            end tolerance=1pt,
            flip crossing=2,
    ]
    \strand [thick] (0,2) to [out=0,in=60] (.86,.5) to [out=-120,in=-60] (-1.72,-1) to [out=120,in=180] (0,1) to [out=0,in=60] (1.72,-1) to  [out=-120,in=-60] (-.86,.5) to [out=120, in=180] (0,2);
    \end{knot}
\end{tikzpicture}
\]
\end{example}

Most generally, the knots and links we will consider in what follows will be framed: they can be thought of as being made of ``bands'' rather than strings. This does not cause much difference in the theory, and in our case this is purely technical: the invariants that will show up later are naturally invariants of framed knots, but making them invariants of genuine knots is just a matter of a rescaling factor based on a crossing count.

A given knot can be represented by many different diagrams, and the following classical theorem of Reidemeister gives us a combinatorial tool to relate different diagrams associated to the same knot.

\begin{theorem}
  Two diagrams $D$ and $D'$ represent the same framed knot if and only if they are related by a sequence of planar isotopies and of moves of the following kinds:
  \begin{itemize}
  \item $R_I'$:
    $
    \begin{tikzpicture}[anchorbase,scale=.5]
\begin{knot}[
            clip width=7,
            consider self intersections,
            end tolerance=1pt,
            flip crossing=2,
    ]
    \strand [thick] (0,0) -- (0,1) to [out=90,in=90] (.7,1) to [out=-90,in=-90] (.1,1) -- (.1,2) to [out=90,in=90] (.7,2) to [out=-90,in=-90] (0,2) -- (0,3);
    \end{knot}
    \end{tikzpicture} \quad \sim \quad
    \begin{tikzpicture}[anchorbase,scale=.5]
    \draw [thick] (0,0) -- (0,3);
    \end{tikzpicture}
    \quad \sim \quad
        \begin{tikzpicture}[anchorbase,scale=.5]
\begin{knot}[
            clip width=7,
            consider self intersections,
            end tolerance=1pt,
            flip crossing=1,
    ]
    \strand [thick] (0,0) -- (0,1) to [out=90,in=90] (.7,1) to [out=-90,in=-90] (-.1,1) -- (-.1,2) to [out=90,in=90] (.7,2) to [out=-90,in=-90] (0,2) -- (0,3);
    \end{knot}
    \end{tikzpicture}
    $
      \item $R_{II}$: $
            \begin{tikzpicture}[anchorbase,scale=.5]
\begin{knot}[
            clip width=7,
            consider self intersections,
            end tolerance=1pt,
            flip crossing=,
    ]
  \strand [thick] (0,0) to [out=90,in=-90] (1,1) to [out=90,in=-90] (0,2);
  \strand [thick] (1,0) to [out=90,in=-90] (0,1) to [out=90,in=-90] (1,2);
    \end{knot}
            \end{tikzpicture}
            \quad \sim \quad
            \begin{tikzpicture}[anchorbase,scale=.5]
              \draw [thick] (0,0) --(0,2);
              \draw [thick] (1,0) -- (1,2);
            \end{tikzpicture}
            $
          \item $R_{III}$:
            $
            \begin{tikzpicture}[anchorbase,scale=.5]            
              \begin{knot}[
                  clip width=7,
                  consider self intersections,
                  end tolerance=1pt,
                  flip crossing=,
                ]
                \strand [thick] (0,0) to [out=90,in=-90] (1,1) to [out=90,in=-90] (2,2) -- (2,3);
                \strand [thick] (1,0) to [out=90,in=-90] (0,1) -- (0,2) to [out=90,in=-90] (1,3);
                \strand [thick] (2,0) -- (2,1) to [out=90,in=-90] (1,2) to [out=90, in=-90] (0,3);
              \end{knot}
            \end{tikzpicture}
            \quad \sim \quad
            \begin{tikzpicture}[anchorbase,scale=.5]            
              \begin{knot}[
                  clip width=7,
                  consider self intersections,
                  end tolerance=1pt,
                  flip crossing=,
                ]
                \strand [thick] (0,0) -- (0,1) to [out=90,in=-90] (1,2) to [out=90,in=-90]  (2,3);
                \strand [thick] (1,0) to [out=90,in=-90] (2,1) --(2,2) to [out=90,in=-90] (1,3);
                \strand [thick] (2,0) to [out=90,in=-90] (1,1) to [out=90, in=-90] (0,2) --(0,3);
              \end{knot}
            \end{tikzpicture}
            $
    \end{itemize}
  \end{theorem}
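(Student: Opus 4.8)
The plan is as follows.

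\emph{The ``if'' direction} is routine, granting the convention that a diagram stands for the underlying knot equipped with its blackboard framing. One checks that each listed move, as well as any planar isotopy, is induced by an ambient isotopy of $\R^3$ carrying the framed band over one local picture to the framed band over the other. A planar isotopy extends to an isotopy of a bicollar of the plane and manifestly preserves the blackboard framing. For $R_{II}$ and $R_{III}$ one writes down explicitly, inside a small ball, the isotopy pulling one strand off another, resp.\ dragging a strand across a crossing; neither touches the framing. The move $R_I'$ is exactly the variant of the first Reidemeister move whose two local pictures carry the \emph{same} writhe, so the obvious isotopy shrinking and annihilating the two opposite curls can be chosen to preserve the blackboard framing; this is the whole point of using $R_I'$ rather than the single-curl move, which changes the self-linking number of the blackboard-framed knot by $\pm1$.

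\emph{The ``only if'' direction} is the real content, and I would split it in three steps. First, represent $D$ and $D'$ by embedded annuli (framed knots) $K_0,K_1\subset\R^3$ and pick an ambient isotopy $\{h_t\}_{t\in[0,1]}$ with $h_0=\id$ and $h_1(K_0)=K_1$; fix the projection $\pi\colon\R^3\to\R^2$ used to draw diagrams. After a small perturbation of the path $t\mapsto\pi\circ h_t|_{K_0}$ --- a multijet transversality argument in the smooth category, or a general-position argument applied to the triangle moves generating PL isotopy in the piecewise-linear one --- one may assume that the projected core circle is a regular diagram for all $t$ outside a finite set $t_1<\dots<t_N$, with $D$ and $D'$ the diagrams at the endpoints, and that at each $t_i$ the projection acquires exactly one codimension-one degeneracy.

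Second, one invokes the classification of codimension-one singularities of a generic one-parameter family of immersed plane curves carrying height data: across $t_i$ the projected curve develops a cusp, or a simple tangency between two branches, or an ordinary triple point. Localising $h_t$ around $t_i$ and around the singular point, the diagram changes there by an $R_I$, an $R_{II}$, or an $R_{III}$ move respectively, the over/under data being read off from the continuous, hence sign-constant, height difference of the branches involved; at all other times the diagram only moves by planar isotopy. No crossing change can occur, for that would force $h_t$ to identify two points of $K_0$ at equal height. Thus $D$ and $D'$ are related by planar isotopies and by moves $R_I$, $R_{II}$, $R_{III}$. Third, since the self-linking number of a blackboard-framed knot is the writhe of its diagram and $h_t$ preserves the framing, $w(D)=w(D')$; as $R_{II}$ and $R_{III}$ preserve the writhe, the $R_I$ moves just produced have total writhe zero. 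Using planar isotopies to slide a curl along a strand out of the support of every other move, one pushes all these $R_I$ moves to the end of the sequence, leaving a tail of $R_I$ moves of net writhe zero between two diagrams that differ only by curls; an elementary induction (slide curls together, then cancel an adjacent opposite pair, which is precisely an $R_I'$) rewrites this tail using only $R_I'$ and planar isotopies, finishing the proof.

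\emph{Expected main obstacle.} The genuine difficulty sits in the first step, and in making the singularity classification of the second step precise: showing that a \emph{generic} path of projections is a path of regular diagrams interrupted only by the three elementary catastrophes above. Smoothly this is the transversality input; in the PL category it is the somewhat delicate case analysis proving that an arbitrary triangle move, once put in general position relative to the rest of the diagram, factors as a product of $R_I$, $R_{II}$, $R_{III}$ and planar isotopies according to how many strands pass over and under the triangle and whether its vertices project onto crossings. By contrast the ``if'' direction and the framing bookkeeping of the last step are routine once this is in place.
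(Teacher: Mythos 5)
The paper does not prove this statement: it is quoted as Reidemeister's classical theorem (in its framed variant) and the reader is referred to \cite{BurdeZieschang,Cromwell,Rolfsen}, so there is no in-paper argument to compare against. Your sketch is the standard textbook proof and is essentially correct: the ``if'' direction by exhibiting local ambient isotopies preserving the blackboard framing, the ``only if'' direction by putting a generic one-parameter family of projections in general position and classifying the codimension-one catastrophes (cusp, tangency, triple point) to get $R_I$, $R_{II}$, $R_{III}$, and then a writhe bookkeeping step to trade the $R_I$ moves for $R_I'$ moves. One imprecision in the last step: transporting a small curl along the knot so as to bring two opposite curls adjacent to each other is \emph{not} a planar isotopy once the curl has to pass through a crossing of the diagram; the standard kink-sliding lemma uses $R_{II}$ and $R_{III}$ moves for this. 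Since those moves are available, this does not break the argument, but you should not claim the curls are moved by planar isotopy alone. Likewise, ``pushing all $R_I$ moves to the end of the sequence'' is cleaner if phrased as: replace each single $R_I$ move by an $R_I'$ move together with a residual curl parked in a small disk away from the rest of the diagram, then at the end cancel the parked curls in pairs (possible since the net writhe change is zero, using the kink-sliding lemma plus $R_I'$). With those two clarifications your outline matches the classical proof the paper implicitly relies on.
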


In what follows, we will very often consider oriented versions of knots and links. This can be represented by orienting the diagram, and causes no difference in Reidemeister's theorem.

\subsection{Tangles and their category presentation}

Tangles are links that are allowed to have ends. More precisely, they are embeddings
\[
\amalg_k \Ss^1\amalg_l [0,1] \hookrightarrow [0,1]^3
\]
with prescribed endpoints on $[0,1]^2\times \{0,1\}$, considered up to isotopy relative boundary. Projection from the cube to the square that collapses, say, the first coordinate, allows to consider diagrams just as in the case of links.

\begin{example}
  \[
\begin{tikzpicture}[anchorbase,scale=.8]
  \begin{knot}[
            clip width=7,
            consider self intersections,
            end tolerance=1pt,
            flip crossing=1,
            flip crossing=2,
            flip crossing=4,
            flip crossing=6,
            flip crossing=8,
    ]
    \strand [thick] (0,2) to [out=0,in=60] (.86,.5) to [out=-120,in=-60] (-1.72,-1) to [out=120,in=180] (0,1) to [out=0,in=60] (1.72,-1) to  [out=-120,in=-60] (-.86,.5) to [out=120, in=180] (0,2);
    \strand [thick] (-2,-2.5) to [out=90,in=180] (-1.72,-.8) to [out=0,in=180] (0,-1.5) to [out=0,in=180] (1.72,-.8) to [out=0,in=90] (2,-2.5);
    \strand [thick] (-1,3) to [out=-90,in=180] (0,1.5) to [out=0,in=-90] (1,3);
  \end{knot}
\end{tikzpicture}
  \]
\end{example}

We will often refer to a tangle by saying that it is a $(k,l)$ tangle. By this we mean that it has $k$ ends at the bottom ($[0,1]^2\times \{0\}$) and $l$ ends at the top ($[0,1]^2\times \{1\}$).

Braids appear as a particular class of tangles, namely those with no horizontal tangencies. This is probably neither the easiest nor the most convenient definition, and it might be more illustrative to define the braid group on $n$ strands as the group generated by the elements:
\[
\sigma_i=
 \begin{tikzpicture}[anchorbase,scale=.5]
  \begin{knot}[
      clip width=7,
      consider self intersections,
      end tolerance=1pt,
      flip crossing=,
    ]
    \draw [thick] (-1.5,0) -- (-1.5,1);
    \node at (-.75,.5) {$\cdots$};
    \strand [thick] (0,0) to [out=90,in=-90] (1,1);
    \strand [thick] (1,0) to [out=90,in=-90] (0,1);
    \node at (1.75,.5) {$\cdots$};
    \draw [thick] (2.5,0) -- (2.5,1);
    \node at (-1.5,-.3) {\tiny $1$};
    \node at (0,-.3) {\tiny $i$};
    \node at (1,-.3) {\tiny $i$+$1$};
    \node at (2.5,-.3) {\tiny $n$};
  \end{knot}
\end{tikzpicture}
 \]
The multiplication law is given by vertical superposition, and the inverses of the generators can be guessed from the Reidemeister II move. To get a presentation by generators and relations, one should also add the third Reidemeister relation for adjacent crossings, and isotopies that allow far-away crossings to commute.

If braids appear as a particular class of tangles, which themselves generalize links, one can also go back to link theory from braid theory. Indeed, the annular closure (see the right-hand side of Equation \eqref{eq:betahat}) of a braid always forms a link. The following theorem of Alexander states that any link actually appears in this way, and we will make use of it later.

\begin{theorem}
  Every knot is the closure of a braid:
\begin{equation} \label{eq:betahat}
  K\quad \sim \quad
  \begin{tikzpicture}[anchorbase,scale=.7]
    \draw (0,0) rectangle (2,2);
    \node at (1,1) {$\beta$};
    \draw (.5,2) to [out=90, in=90] (3.5,2) -- (3.5,0) to [out=-90,in=-90] (.5,0);
    \draw (1.5,2) to [out=90, in=90] (2.5,2) -- (2.5,0) to [out=-90,in=-90] (1.5,0);
    \node at (3,1) {\small $\cdots$};
    \end{tikzpicture}
\end{equation}
\end{theorem}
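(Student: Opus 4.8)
The plan is to follow the classical Alexander trick, using the diagrammatic picture. First I would fix a generic diagram $D$ of the knot $K$ in the plane and choose a basepoint (a "braid axis") $\ast$ in the complement of $D$, thought of as the center around which we will try to wind the diagram. Orienting $D$, we say an arc of $D$ is \emph{coherent} if it runs counterclockwise around $\ast$, and \emph{incoherent} otherwise. If every arc is coherent, then reading the diagram radially from $\ast$ exhibits $D$ as the closure of a braid in the sense of Equation~\eqref{eq:betahat}: cutting along a ray from $\ast$ produces a tangle with no horizontal tangencies with respect to the radial direction, i.e.\ a braid, whose annular closure is $D$. So the entire content is to reduce to this coherent situation.

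The key step is the elimination of incoherent arcs one at a time without increasing their number. Given an incoherent arc $\gamma$ (a piece of $D$ running clockwise around $\ast$), subdivide it if necessary so that it contains no crossing, then replace $\gamma$ by a detour that passes on the \emph{other} side of $\ast$: isotope $\gamma$ across the point $\ast$, pushing it over (or under) all the strands it meets along the way. This is an isotopy of $K$ in $\R^3$ (the over/under choice for the newly created crossings is free), hence produces a new diagram $D'$ of the same $K$; and after the move the offending arc now runs counterclockwise around $\ast$, while no new incoherent arc has been created. Iterating, after finitely many such moves we arrive at a diagram all of whose arcs are coherent, and the previous paragraph finishes the proof. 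The framed statement requires no extra work: the isotopies and the pushing move used above are ambient isotopies of $K$, so they respect the framing, and the final braid closure carries the same framing as $K$.

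The main obstacle — and the only point that needs care — is verifying that the push-across-$\ast$ move genuinely does not increase the number of incoherent arcs, so that the induction terminates. One must argue that a single incoherent arc can always be chosen small enough (no crossings, monotone with respect to the angular coordinate) that pushing it to the far side of $\ast$ turns it coherent, and that the portions of $D$ it is pushed over remain coherent because the move only adds fresh crossings without reversing any arc's angular direction. Making "number of incoherent arcs" into a well-defined, strictly decreasing quantity is the heart of the argument; everything else (genericity of $D$, the dictionary between "all arcs coherent" and "braid closure", invariance of the framing) is routine.
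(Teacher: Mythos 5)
The paper does not prove this statement: it is quoted as Alexander's classical theorem and used as a black box, so there is no internal proof to compare against. Your sketch is the standard ``Alexander trick'' (make the diagram wind coherently around a chosen axis by throwing the offending arcs to the far side of the axis), and its overall structure is sound: subdividing so that each incoherent arc is crossing-free and angularly monotone, replacing it by the detour around the other side of $\ast$, and observing that this strictly decreases the number of incoherent arcs while leaving the others untouched is exactly the usual termination argument, and the remark that everything is an ambient isotopy takes care of the framed version. One point to tighten: your parenthetical that ``the over/under choice for the newly created crossings is free'' is misleading as written. For the replacement to be an ambient isotopy of $K$ in $\R^3$, the detour arc must be pushed \emph{entirely} above or \emph{entirely} below the rest of the diagram (one global choice per arc, not one per crossing); choosing over/under independently at each new crossing would amount to replacing an embedded arc by an arbitrary one and would in general change the knot type. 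With that correction, and with the deferred verification that the incoherent-arc count is well defined and strictly decreasing actually written out, the plan becomes a complete proof.
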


It is worth mentioning that this correspondence is not one-to-one: different braids can have isotopic closures, but moves relating such braids have been classified (this is called Markov's theorem).

A nice feature of the tangles is that they can easily be organized into a category, which we will much use in the next sections.

\begin{definition}(See also \cite[Section XII]{Kassel}.)
  The category \(\catTangles\) is the monoidal category of oriented, framed tangles. Its objects are generated by \(\{\uparrow,\downarrow\}\), and its morphisms are (diagrams of) oriented, framed tangles modulo isotopy. We also define \(\catTangles_\fieldk\) as its \(\fieldk\)-linear version, with the same objects but with morphisms being  free \(\fieldk\)-modules (with $\fieldk$ any ring) generated by tangles.
\end{definition}

In other words, objects in both categories are sequences of up and down arrows, while morphisms are given by tangles mapping between such sequences, or linear combinations of such tangles. The adjective {\it monoidal} means that the category is equipped with a tensor product, which in our case simply translates into the fact that two tangles can be put next to each other, forming a bigger tangle.

\begin{example}
  Choosing $\eta =\emptyset$ and $\eta'=\uparrow \downarrow \downarrow \uparrow$, then:
  \[
  \begin{tikzpicture}[anchorbase,scale=.5]
    \draw [->] (1,1) -- (1,.5) to [out=-90,in=-90] (0,.5) -- (0,1);
    \draw [->] (2,1) -- (2,.5) to [out=-90,in=-90] (3,.5) -- (3,1);
    \end{tikzpicture}
\quad   and \quad 
  \begin{tikzpicture}[anchorbase,scale=.5]
  \begin{knot}[
      clip width=7,
      consider self intersections,
      end tolerance=1pt,
      flip crossing=,
    ]
    \strand [->] (1,1) -- (1,.5) to [out=-90,in=-90] (3,.5) -- (3,1);
    \strand [->] (2,1) -- (2,.5) to [out=-90,in=-90] (0,.5) -- (0,1);
  \end{knot}
\end{tikzpicture}
\]
  are typical morphisms in $\catTangles(\eta,\eta')$, and
\[
  2\;
   \begin{tikzpicture}[anchorbase,scale=.5]
    \draw [->] (1,1) -- (1,.5) to [out=-90,in=-90] (0,.5) -- (0,1);
    \draw [->] (2,1) -- (2,.5) to [out=-90,in=-90] (3,.5) -- (3,1);
    \end{tikzpicture}  
\; - 3 \;
  \begin{tikzpicture}[anchorbase,scale=.5]
    \begin{knot}[
      clip width=7,
      consider self intersections,
      end tolerance=1pt,
      flip crossing=,
    ]
    \strand [->] (1,1) -- (1,.5) to [out=-90,in=-90] (3,.5) -- (3,1);
    \strand [->] (2,1) -- (2,.5) to [out=-90,in=-90] (0,.5) -- (0,1);
  \end{knot}
\end{tikzpicture}
\]
  something that typically leaves in $\catTangles_\fieldk(\eta,\eta')$.  Notice that:
 \[
  \begin{tikzpicture}[anchorbase,scale=.5]
    \draw [->] (1,1) -- (1,.5) to [out=-90,in=-90] (0,.5) -- (0,1);
    \draw [->] (2,1) -- (2,.5) to [out=-90,in=-90] (3,.5) -- (3,1);
  \end{tikzpicture}
  \;=\;
  \begin{tikzpicture}[anchorbase,scale=.5]
    \draw [->] (1,1) -- (1,.5) to [out=-90,in=-90] (0,.5) -- (0,1);
  \end{tikzpicture}
  \;\otimes \;
  \begin{tikzpicture}[anchorbase,scale=.5]
    \draw [->] (2,1) -- (2,.5) to [out=-90,in=-90] (3,.5) -- (3,1);
  \end{tikzpicture}
  \]
  while
  \[
  \begin{tikzpicture}[anchorbase,scale=.5]
  \begin{knot}[
      clip width=7,
      consider self intersections,
      end tolerance=1pt,
      flip crossing=,
    ]
    \strand [->] (1,1) -- (1,.5) to [out=-90,in=-90] (3,.5) -- (3,1);
    \strand [->] (2,1) -- (2,.5) to [out=-90,in=-90] (0,.5) -- (0,1);
  \end{knot}
\end{tikzpicture}
\]
 is not the tensor product of two simpler tangles.
\end{example}

In light of the above example, here might be a good place to highlight the reading conventions we adopt for diagrammatic categories: objects are displayed horizontally and read (with respect to tensor product) from left to right, and morphisms appear vertically and are read from bottom to top.

\subsection{The invariants}

Here and throughout these notes, we set $\fieldk$ to be either $\C(q)[q^{\pm \beta}]$, or $\C(q)$. In the first case, this is to be understood as an extension of $\C(q)$ by a formal invertible variable denoted $q^{\beta}$. Then setting $\beta=d\in \Z$ induces a map $\C(q)[q^{\pm \beta}]\rightarrow \C(q)$.

Our interest in this peculiar writing is that we will allow ourselves to write things like: $q^2\cdot q^{2\beta}=q^{2+2\beta}$, which makes the following notations easier to handle.
\begin{align}
  [x]&=\frac{q^x-q^{-x}}{q-q^{-1}}, \quad x\in \Z\beta+\Z \\
  [n]!&=[1][2]\cdots [n],\quad n\in \N
\end{align}

\begin{example}
  \[
    [2]=q+q^{-1},\quad [\beta]=\frac{q^\beta-q^{-\beta}}{q-q^{-1}}
    \]
  \end{example}

The following definition despite its simplicity, will be instrumental in our presentation of the invariants. This definition originates from \cite{DDS} up to minor changes of conventions.

\begin{definition}
  The quantized oriented Brauer category $\uBr(\beta)$ is the quotient of $\catTangles_\fieldk$, with $\fieldk=\C(q)[q^{\pm \beta}]$,    modulo the following relations
\begin{subequations}
  \label{eq:5}
  \begin{align}
     \begin{tikzpicture}[smallnodes,anchorbase,xscale=0.7,yscale=0.5]
   \draw[uno] (1,0)  -- ++(0,0.3) \midarrow .. controls ++(0,0.7) and ++(0,-0.7) .. ++(-1,1.4) -- ++(0,0.3)  \midarrow ;
   \draw[uno,cross line] (0,0)  -- ++(0,0.3) \midarrow .. controls ++(0,0.7) and ++(0,-0.7) .. ++(1,1.4) -- ++(0,0.3)  \midarrow ;
    \end{tikzpicture} \; - \;
    \begin{tikzpicture}[smallnodes,anchorbase,xscale=0.7,yscale=0.5]
   \draw[uno] (0,0)  -- ++(0,0.3) \midarrow .. controls ++(0,0.7) and ++(0,-0.7) .. ++(1,1.4) -- ++(0,0.3)  \midarrow ;
   \draw[uno,cross line] (1,0)  -- ++(0,0.3) \midarrow .. controls ++(0,0.7) and ++(0,-0.7) .. ++(-1,1.4) -- ++(0,0.3)  \midarrow ;
    \end{tikzpicture} \; & = (q^{-1}-q) \;
    \begin{tikzpicture}[smallnodes,anchorbase,yscale=0.5,xscale=0.7]
   \draw[uno] (1,0) .. controls ++(0.25,0.5) and ++(0.25,-0.5)  .. ++(0,2) \midarrow ;
   \draw[uno] (2,0) .. controls ++(-0.25,0.5) and ++(-0.25,-0.5)  .. ++(0,2) \midarrow  ;
    \end{tikzpicture}\;,  \qquad&
        \begin{tikzpicture}[smallnodes,anchorbase,scale=0.7]
      \draw[uno] (0,0) arc (0:360:0.5cm) \midarrowrev;
    \end{tikzpicture} \; =\;
        \begin{tikzpicture}[smallnodes,anchorbase,scale=0.7]
      \draw[uno] (0,0) arc (0:360:0.5cm) \midarrow;
    \end{tikzpicture} \; &= [\beta] \label{eq:52}\\
    \begin{tikzpicture}[smallnodes,anchorbase,xscale=0.7,yscale=0.5]
   \draw[uno] (0.8,1)  .. controls ++(0,-0.3) and ++(0.3,0) .. ++(-0.3,-0.6)  .. controls ++(-0.5,0) and ++(0,-0.7) .. ++(-0.5,1.6) \nendarrow  ;
   \draw[uno, cross line] (0.8,1)  .. controls ++(0,0.3) and ++(0.3,0) .. ++(-0.3,0.6) \startarrowrev .. controls ++(-0.5,0) and ++(0,0.7) .. ++(-0.5,-1.6) \nendarrowrev;
    \end{tikzpicture} \; = \;
    \begin{tikzpicture}[smallnodes,anchorbase,xscale=0.7,yscale=0.5]
   \draw[uno, cross line] (-0.8,1)  .. controls ++(0,0.3) and ++(-0.3,0) .. ++(0.3,0.6) \startarrowrev .. controls ++(0.5,0) and ++(0,0.7) .. ++(0.5,-1.6) \nendarrowrev ;
   \draw[uno, cross line] (-0.8,1)  .. controls ++(0,-0.3) and ++(-0.3,0) .. ++(0.3,-0.6)  .. controls ++(0.5,0) and ++(0,-0.7) .. ++(0.5,1.6) \nendarrow  ;
    \end{tikzpicture} \;& = q^{-\beta} \;
    \begin{tikzpicture}[smallnodes,anchorbase,xscale=0.7,yscale=0.5]
      \draw[uno] (0,0)  -- ++(0,2) \midarrow ;
    \end{tikzpicture}\;, &
    \begin{tikzpicture}[smallnodes,anchorbase,xscale=0.7,yscale=0.5]
   \draw[uno, cross line] (0.8,1)  .. controls ++(0,0.3) and ++(0.3,0) .. ++(-0.3,0.6) \startarrowrev .. controls ++(-0.5,0) and ++(0,0.7) .. ++(-0.5,-1.6) \nendarrowrev ;
   \draw[uno, cross line] (0.8,1)  .. controls ++(0,-0.3) and ++(0.3,0) .. ++(-0.3,-0.6)  .. controls ++(-0.5,0) and ++(0,-0.7) .. ++(-0.5,1.6) \nendarrow  ;
    \end{tikzpicture} \; = \;
    \begin{tikzpicture}[smallnodes,anchorbase,xscale=0.7,yscale=0.5]
   \draw[uno] (-0.8,1)  .. controls ++(0,-0.3) and ++(-0.3,0) .. ++(0.3,-0.6)  .. controls ++(0.5,0) and ++(0,-0.7) .. ++(0.5,1.6) \nendarrow  ;
   \draw[uno, cross line] (-0.8,1)  .. controls ++(0,0.3) and ++(-0.3,0) .. ++(0.3,0.6) \startarrowrev .. controls ++(0.5,0) and ++(0,0.7) .. ++(0.5,-1.6) \nendarrowrev;
    \end{tikzpicture} \; & = q^{+ \beta} \;
    \begin{tikzpicture}[smallnodes,anchorbase,xscale=0.7,yscale=0.5]
      \draw[uno] (0,0)  -- ++(0,2) \midarrow ;
    \end{tikzpicture}\label{eq:54}
  \end{align}
\end{subequations}
  \end{definition}

The category $\uBr(\beta)$ is again a monoidal category. The same definition (and consequences below) holds when $\beta$ gets specialized to $n\in \Z$. We will denote the corresponding category by $\uBr(\beta=n)$.

Dipper, Doty and Stoll proved the following result~\cite{DDS}.

\begin{proposition} \label{prop:DDS}
  $\End_{\uBr(\beta)}\left((\uparrow)^{\otimes r}(\downarrow)^{\otimes s}\right)$ is free of rank $(r+s)!$.
\end{proposition}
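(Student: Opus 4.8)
\emph{A spanning set.} The plan is to prove the two inequalities $\operatorname{rank}\le (r+s)!$ and $\operatorname{rank}\ge (r+s)!$ separately. For the upper bound I would first describe an explicit spanning set. An endomorphism diagram of $(\uparrow)^{\otimes r}(\downarrow)^{\otimes s}$ with no crossings and no closed components is just an oriented perfect matching of the $2(r+s)$ boundary points, and since the orientation must be consistent along each strand, every turnback (cup or cap) joins an $\uparrow$-endpoint to a $\downarrow$-endpoint, i.e.\ one of the first $r$ boundary points to one of the last $s$, while every through-strand stays on one side of the wall separating the first $r$ from the last $s$ strands. These are exactly the \emph{walled Brauer diagrams} $B_{r,s}$, and it is classical that there are $(r+s)!$ of them (they are in bijection with $S_{r+s}$). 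I would fix once and for all a planar representative $D_w$ of each such matching, drawn with a minimal number of crossings and with no curls, and claim that $\{D_w\}$ spans $\End_{\uBr(\beta)}\big((\uparrow)^{\otimes r}(\downarrow)^{\otimes s}\big)$ over $\fieldk$.

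\emph{Reduction.} The substance of the upper bound is to show that an arbitrary tangle representing an endomorphism is a $\fieldk$-linear combination of the $D_w$. Put the tangle in generic position. Each closed component can be deleted: resolving its crossings with the Hecke skein relation \eqref{eq:5} together with the Reidemeister moves already available in $\catTangles_\fieldk$ leaves a union of round circles and curls, which are replaced by the scalars $[\beta]$ and $q^{\pm\beta}$ via \eqref{eq:52} and \eqref{eq:54}. One is then left with a tangle on $r+s$ strands with no closed components, and one inducts on the number of crossings: a curl is erased by \eqref{eq:54}; a bigon between two strands is removed by Reidemeister~II, after first trading one of its two crossings for the opposite crossing plus a diagram with one fewer crossing using \eqref{eq:5} when the crossings have equal sign; and Reidemeister~III slides crossings past triple points. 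Iterating, the tangle is rewritten, modulo diagrams with strictly fewer crossings, as a minimal-crossing drawing of a single walled Brauer matching, and any two minimal drawings of the same matching are isotopic — this is the Brauer analogue of the Matsumoto--Tits fact that reduced words differ by braid moves, together with the isotopy classification of crossingless matchings — so that drawing equals the chosen $D_w$. Hence $\operatorname{rank}\le (r+s)!$. \emph{This reduction is the main obstacle}: one must organize the induction so that the skein, curl and Reidemeister relations genuinely collapse every diagram onto the finite list $\{D_w\}$, in particular controlling how the Hecke skein relation interacts with cups and caps and checking that no matching outside $B_{r,s}$ is produced.

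\emph{Linear independence.} Finally I would show the $D_w$ are $\fieldk$-linearly independent, which together with the spanning statement yields freeness of rank exactly $(r+s)!$. For each positive integer $d$, specializing $q^{\beta}\mapsto q^{d}$ produces $\uBr(\beta=d)$ and, via the functor of Section~\ref{sec:RT} to representations of $U_q(\gl_{m|n})$ with $m-n=d$ and $m$ large (sending $\uparrow\mapsto V$, $\downarrow\mapsto V^{*}$), carries the $D_w$ to the operators on $V^{\otimes r}\otimes(V^{*})^{\otimes s}$ of quantized mixed Schur--Weyl duality, which for $m+n\ge r+s$ are known to be $\C(q)$-linearly independent. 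A nontrivial $\fieldk$-linear relation $\sum_w c_w D_w=0$, the $c_w$ being Laurent polynomials in $q^{\beta}$ over $\C(q)$, would then specialize for all large $d$ to $\sum_w c_w(q,q^{d})\,\overline{D_w}=0$, forcing $c_w(q,q^{d})=0$; since $\{q^{d}\}$ is an infinite set of distinct elements of $\C(q)$, each Laurent polynomial $c_w$ vanishes identically, a contradiction. Thus $\{D_w\}$ is a $\fieldk$-basis and the module is free of rank $(r+s)!$. (This is essentially the content of \cite{DDS}.)
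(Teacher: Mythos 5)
First, a point of comparison: the paper does not actually prove this proposition --- it is quoted from \cite{DDS}, and the only argument given in the text is the ``sketch of the easy part'' of Corollary~\ref{cor:endBr}, i.e.\ the spanning half in the special case $r=s=0$. Your upper-bound strategy (reduce any diagram, via the skein, curl and circle relations, to a fixed family of walled Brauer diagram representatives) is exactly that sketch run in general, and your overall two-step plan is essentially the route of \cite{DDS} itself. So the architecture is right; what remains are the two substantive steps, and both are currently deferred.

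On the spanning step, two things need to be nailed down beyond what you wrote. The skein relation in~\eqref{eq:52} is only imposed for a crossing of two upward strands; before you can ``resolve crossings'' of arbitrary tangles you must derive its analogues for the other three orientation patterns by composing with cups and caps (and here the curl relations~\eqref{eq:54} enter to control the resulting framing factors). Also, a ``minimal-crossing drawing with no curls'' of a walled Brauer matching is not yet a single morphism: each crossing carries an over/under choice, so you should fix, say, the positive lift of a chosen reduced expression and then prove well-definedness by a Matsumoto-type argument using $R_{III}$ and distant commutativity --- this is exactly the point where the interaction of the skein relation with cups and caps, which you flag as the main obstacle, has to be controlled. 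On the linear-independence step, be careful about circularity: the statement that the $(r+s)!$ walled Brauer operators on $V^{\otimes r}\otimes(V^{*})^{\otimes s}$ are linearly independent for $m$ large \emph{is} the faithfulness half of quantized mixed Schur--Weyl duality, i.e.\ essentially the content of the result you are trying to prove. It should not be imported as ``known'' but proved directly, e.g.\ by a triangularity argument: evaluate the operators on a pure tensor with pairwise distinct indices and observe that the leading terms are distinct basis vectors. Granting that, your specialization argument (a Laurent polynomial in $q^{\beta}$ over $\C(q)$ vanishing at $q^{\beta}=q^{d}$ for infinitely many $d$ is zero, and a spanning linearly independent set over the Laurent polynomial ring $\fieldk$ is a basis) is correct and closes the proof.
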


\begin{corollary} \label{cor:endBr}
In particular, $\End_{\uBr(\beta)}(\emptyset)\simeq \fieldk$.
\end{corollary}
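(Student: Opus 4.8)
The plan is to deduce Corollary~\ref{cor:endBr} directly from Proposition~\ref{prop:DDS} by taking $r=s=0$. When $r=s=0$, the object $(\uparrow)^{\otimes r}(\downarrow)^{\otimes s}$ is the empty sequence, i.e.\ the monoidal unit $\emptyset$, and $(r+s)!=0!=1$. Thus Proposition~\ref{prop:DDS} says exactly that $\End_{\uBr(\beta)}(\emptyset)$ is a free $\fieldk$-module of rank $1$. A free module of rank $1$ over $\fieldk$ is isomorphic to $\fieldk$, which is the asserted statement.

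There is however one small point worth addressing to make the argument honest: a priori ``free of rank $1$ as a $\fieldk$-module'' only gives an isomorphism of $\fieldk$-modules, whereas $\End_{\uBr(\beta)}(\emptyset)$ carries more structure, namely that of a $\fieldk$-algebra (with multiplication given by composition, which here coincides with the tensor product since all these morphisms are endomorphisms of the unit). So I would spell out that the identity morphism $\id_\emptyset$ is a generator of this rank-one free module: indeed $\id_\emptyset$ is not a zero divisor and cannot be a nontrivial $\fieldk$-multiple of anything, and since the module has rank $1$ it must be that every endomorphism of $\emptyset$ is a unique $\fieldk$-scalar multiple of $\id_\emptyset$. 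Concretely, picking any basis element $x$, one writes $\id_\emptyset = c\, x$ for some $c\in\fieldk$; composing with $x$ and using that $x = x\circ \id_\emptyset = c\, x\circ x$ forces, after comparing coefficients in the rank-one module, $c$ to be invertible, so $\id_\emptyset$ is itself a basis. Hence $\fieldk \to \End_{\uBr(\beta)}(\emptyset)$, $a\mapsto a\,\id_\emptyset$, is an isomorphism of $\fieldk$-algebras.

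In fact there is an even more elementary route that avoids invoking the full strength of Proposition~\ref{prop:DDS}, which one could mention as an alternative: any morphism $\emptyset \to \emptyset$ in $\catTangles_\fieldk$ is a $\fieldk$-linear combination of diagrams of tangles with no boundary, i.e.\ of framed oriented links; applying the skein relation~\eqref{eq:5} repeatedly one can resolve every crossing, reducing any such link diagram to a disjoint union of unknots, and each circle can then be evaluated to the scalar $[\beta]$ using~\eqref{eq:52} (the orientation of the circle not mattering, again by~\eqref{eq:52}). This shows $\End_{\uBr(\beta)}(\emptyset)$ is spanned over $\fieldk$ by $\id_\emptyset$, so it is a quotient of $\fieldk$; that it is all of $\fieldk$ (i.e.\ that no further relations are imposed) is then the only genuinely non-trivial input, and that is precisely where Proposition~\ref{prop:DDS} is used.

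The main obstacle is therefore not the deduction itself — which is a one-line specialization — but rather making sure the reader sees that the content of the corollary lies entirely in Proposition~\ref{prop:DDS}, namely in the fact that the map $\fieldk \to \End_{\uBr(\beta)}(\emptyset)$ is \emph{injective} and not merely surjective; the surjectivity (spanning by $\id_\emptyset$) is the easy skein-theoretic reduction just sketched, while the injectivity is exactly the rank statement, which is the substantive result of Dipper--Doty--Stoll. I would present the proof as the specialization $r=s=0$ of Proposition~\ref{prop:DDS}, with a sentence recalling that the generator can be taken to be $\id_\emptyset$ so that the isomorphism is one of $\fieldk$-algebras.
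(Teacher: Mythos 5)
Your proposal is correct and matches the paper's treatment: the corollary is indeed the $r=s=0$ specialization of Proposition~\ref{prop:DDS}, and the paper's printed proof only sketches the ``easy half'' (that $\End_{\uBr(\beta)}(\emptyset)$ is spanned by $\id_\emptyset$, via crossing switches from the lhs of~\eqref{eq:52}, curl removal from~\eqref{eq:54}, and circle evaluation from the rhs of~\eqref{eq:52}), explicitly deferring the rank-one (injectivity) statement to~\cite{DDS}. You identify exactly this division of labor, and your extra remark that $\id_\emptyset$ generates, so that the isomorphism is one of $\fieldk$-algebras, is a harmless and correct refinement.
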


\begin{proof}[Sketch of the easy part of the corollary, without referring to Proposition~\ref{prop:DDS}.]
  
  A tangle in $\End_{\uBr(\beta)}(\emptyset)$ is just a link. Using the lhs of Equation~\eqref{eq:52}, one can switch crossings until reaching a framed unlink. Equation~\eqref{eq:54} allows to reduce a framed unlink to an unframed unlink, and then the rhs of Equation~\eqref{eq:52} allows to go down to the empty link.

This proves that $\End_{\uBr(\beta)}$ is of dimension at most $1$. The harder part of the proof in~\cite{DDS} uses the existence of the HOMFLY-PT polynomial, that we are about to derive from the definition of the quantized oriented Brauer category. This is probably one of the main historical twists of these notes, which allows to give a unified presentation of the invariants at play, but causes great logical distortion!
\end{proof}

A consequence of Corollary~\ref{cor:endBr} is that given a link, its class $[L]\in \End_{\uBr(\beta)}(\emptyset)\simeq \fieldk$ yields a $\fieldk$-valued link invariant. The fact that it is indeed a link invariant simply follows from the fact that the oriented Brauer category is just a quotient of the tangle category, in which links sit.

\begin{definition}\label{def:1}
  Let \(L\) be an oriented, framed link.
  \begin{itemize}
  \item The HOMFLY--PT polynomial of \(L\) is the image $[L] \in \End_{\uBr(\beta)}(\emptyset) = \C(q)[q^{\pm \beta}]$.
  \item 
Let \(n \in \Z\). The $\sln$ Reshetikhin-Turaev polynomial of \(L\) is the image $[L] \in \End_{\uBr(\beta=n)}(\emptyset) = \C(q)$.
  \end{itemize}
\end{definition}

\begin{remark}
  The $\sln$ Reshetikhin-Turaev polynomials are really Laurent polynomials in the variable $q$. The HOMFLY-PT polynomial is not really a polynomial: one needs to localize in $q-q^{-1}$.
 \end{remark}

\begin{example}
  In the case where $\beta=2$, one recovers the Jones polynomial~\cite{Jones}. For the trefoil for example, one can compute:
  \[
\left[
    \begin{tikzpicture}[anchorbase,scale=.5]
  \begin{knot}[
            clip width=7,
            consider self intersections,
            end tolerance=1pt,
            flip crossing=2,
    ]
    \strand [thick] (0,2) to [out=0,in=60] (.86,.5) to [out=-120,in=-60] (-1.72,-1) to [out=120,in=180] (0,1) to [out=0,in=60] (1.72,-1) to  [out=-120,in=-60] (-.86,.5) to [out=120, in=180] (0,2);
    \draw [thick,->] (-.05,2) -- (.05,2);
    \end{knot}
    \end{tikzpicture}
    \right]_{\beta=2}
   = -q^3+q^{-1}+q^{-3}+q^{-5}
\]

Indeed, one first uses a skein relation to switch a crossing:
\[
\left[
    \begin{tikzpicture}[anchorbase,scale=.5]
  \begin{knot}[
            clip width=7,
            consider self intersections,
            end tolerance=1pt,
            flip crossing=2,
    ]
     \strand [thick] (0,2) to [out=0,in=60] (.86,.5) to [out=-120,in=-60] (-1.72,-1) to [out=120,in=180] (0,1) to [out=0,in=60] (1.72,-1) to  [out=-120,in=-60] (-.86,.5) to [out=120, in=180] (0,2);
     \draw [thick,->] (-.05,2) -- (.05,2);
   \end{knot}
    \end{tikzpicture}
    \right]_{\beta=2}
= 
\left[
    \begin{tikzpicture}[anchorbase,scale=.5]
  \begin{knot}[
            clip width=7,
            consider self intersections,
            end tolerance=1pt,
            flip crossing=3,
            flip crossing=2,
    ]
     \strand [thick] (0,2) to [out=0,in=60] (.86,.5) to [out=-120,in=-60] (-1.72,-1) to [out=120,in=180] (0,1) to [out=0,in=60] (1.72,-1) to  [out=-120,in=-60] (-.86,.5) to [out=120, in=180] (0,2);
     \draw [thick,->] (-.05,2) -- (.05,2);
   \end{knot}
    \end{tikzpicture}
    \right]_{\beta=2}
+(q^{-1}-q)
\left[
    \begin{tikzpicture}[anchorbase,scale=.5]
  \begin{knot}[
            clip width=7,
            consider self intersections,
            end tolerance=1pt,
            flip crossing=1,
    ]
     \strand [thick] (0,2) to [out=180,in=60] (-.86,.6) to [out=-120, in=120] (-1.72,-1) to [out=-60, in=-120] (.86,.5) to [out=60,in=0] (0,2);
     \draw [thick,->] (-.05,2) -- (.05,2);
     \strand [thick] (1.72,-1) to [out=60,in=0] (.2,.8) to [out=-180,in=120] (-.5,.5) to [out=-60,in=-120] (1.72,-1);
      \draw [thick,->] (.015,.8) -- (.025,.8);
    \end{knot}
    \end{tikzpicture}
    \right]_{\beta=2}
\]

Then one can slide the curl in the first term, and again apply the skein relation on a crossing in the second term, to obtain the following expression:
\[
\left[
    \begin{tikzpicture}[anchorbase,scale=.5]
  \begin{knot}[
            clip width=7,
            consider self intersections,
            end tolerance=1pt,
            flip crossing=3,
            flip crossing=2,
    ]
     \strand [thick] (0,0) to [out=0,in=60] (.86,-.5) to [out=-120,in=-60] (-1.72,-1) to [out=120,in=180] (0,1) to [out=0,in=60] (1.72,-1) to  [out=-120,in=-60] (-.86,-.5) to [out=120, in=180] (0,0);
     \draw [thick,->] (-.05,0) -- (.05,0);
   \end{knot}
    \end{tikzpicture}
    \right]_{\beta=2}
+(q^{-1}-q)
\left[
    \begin{tikzpicture}[anchorbase,scale=.5]
  \begin{knot}[
            clip width=7,
            consider self intersections,
            end tolerance=1pt,
            flip crossing=1,
            flip crossing=2,
    ]
    \strand [thick] (0,2) to [out=180,in=60] (-.86,.6) to [out=-120, in=120] (-1.72,-1) to [out=-60, in=-120] (.86,.5) to [out=60,in=0] (0,2);
    \draw [thick,->] (-.05,2) -- (.05,2);
     \strand [thick] (1.72,-1) to [out=60,in=0] (.2,.8) to [out=-180,in=120] (-.5,.5) to [out=-60,in=-120] (1.72,-1);
     \draw [thick,->] (.015,.8) -- (.025,.8);
   \end{knot}
    \end{tikzpicture}
    \right]_{\beta=2}
+(q^{-1}-q)^2
\left[
    \begin{tikzpicture}[anchorbase,scale=.5]
  \begin{knot}[
            clip width=7,
            consider self intersections,
            end tolerance=1pt,
            flip crossing=1,
            flip crossing=2,
    ]
    \strand [thick] (0,2) to [out=180,in=60] (-.86,.6) to [out=-120, in=120] (-1.72,-1) to [out=-60, in=-90] (.7,.4) to [out=90,in=0] (.2,.8) to [out=-180,in=120] (-.5,.5) to [out=-60,in=-120] (1.72,-1) to [out=60,in=-45] (.9,.6) to [out=135,in=0] (0,2);  
    \draw [thick,->] (-.05,2) -- (.05,2);
  \end{knot}
    \end{tikzpicture}
    \right]_{\beta=2}
\]

From there everything evaluates thanks to Equations~\eqref{eq:54} that allow to remove the curls, before one evaluates the circles. One gets:
\[
q^{-2}[2]+(q^{-1}-q)[2]^2 +q^{-2}(q^{-1}-q)^2[2]=[2](q^{-4}+1-q^2)
\]

This equals $-q^3+q^{-1}+q^{-3}+q^{-5}$ as expected.
\end{example}

\begin{remark}
  It is an interesting question to wonder what happens if one sets $d<0$.
\end{remark}

\begin{remark}\label{rem:tangleBrRT}
More generally, one can define an invariant associated to tangles: a tangle $T$ gets assigned $[T]\in \Hom_{\uBr(\beta)}(\partial T^{-},\partial T^{+})$. For generic values of $\beta$, this can be seen as a HOMFLY-PT invariant for $T$, but it is interesting to note that the $\beta=d$ specialization does \emph{not} exactly match the $\slnn{d}$ Reshetikhin-Turaev invariant. We will come back to that fact later.
\end{remark}

It is a classical fact that the HOMFLY-PT polynomial admits a specialization to both the Reshetikhin-Turaev invariants and the Alexander polynomial, which is easily checked by comparing Conway-type skein relations for all invariants. We have seen the former, but for the latter case one needs to introduce reduced versions of the invariants. Indeed, the specialization is supposed to use $\beta=0$, which, with the current definition, would systematically yield $[L]=0\in \End_{\uBr(0)}(\emptyset)$ for any non-empty link (recall that $[\beta]=0$ if $\beta =0$).

To go to the reduced case, notice that another consequence of Proposition \ref{prop:DDS} is that $\End_{\uBr(\beta)}(\uparrow)\simeq \fieldk$. This is a perfect situation to produce polynomial invariants and motivates the following definition.

\begin{definition}
  Let $L$ be a link, and $\tilde{L}$ the $(1,1)$ tangle obtained by cutting open one strand. Then:
  \begin{itemize}
  \item if $\beta$ is generic, $[\tilde{L}]\in \End_{\uBr(\beta)}(\uparrow)\simeq \C(q,q^{\beta})$ is the reduced HOMFLY-PT polynomial of $L$;
  \item if $\beta=n>0$, $[\tilde{L}]\in \End_{\uBr(\beta=n)}(\uparrow)\simeq \C(q)$ is the reduced $\sln$ Reshetikhin-Turaev invariant of $L$;
  \item if $\beta=0$, $[\tilde{L}]\in \End_{\uBr(\beta=0)}(\uparrow)\simeq \C(q)$ is the Alexander polynomial of $L$.
  \end{itemize}
\end{definition}

This is all well-defined, thanks to the next proposition.

\begin{proposition}
With the above notations, the value $[\tilde{L}]$ is independent on the choice of the cutting place.
\end{proposition}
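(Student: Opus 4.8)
The plan is to show that the two $(1,1)$-tangles obtained by cutting a link $L$ open at two different places become equal as morphisms in $\uBr(\beta)$, using nothing more than the fact, recorded in Proposition~\ref{prop:DDS}, that $\End_{\uBr(\beta)}(\uparrow)$ is free of rank $1$ over $\fieldk$. First I would reduce to the case of two cutting points that lie on the \emph{same} component of $L$ and are ``adjacent'' in the sense that the arc of $L$ between them contains no crossing with the rest of the diagram: indeed, any two cutting points on the same component can be connected by a finite chain of such elementary moves (sliding the cut point past one crossing or one strand at a time), and any two components can be handled once one knows the statement for a single component together with the observation that, the total class $[L]$ being a scalar times the class of the cut component's $(1,1)$-tangle, cutting on different components gives proportional—hence, after normalizing, equal—elements of the rank-one module $\End_{\uBr(\beta)}(\uparrow)$.

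The core step is the ``adjacent cut points'' case. Here I would use the standard closed-up picture: let $\tilde L$ and $\tilde L'$ be the two $(1,1)$-tangles, and observe that each of them, when its two ends are joined up by a trivial arc, returns to $L$. The key move is the \emph{partial trace} (or ``sliding around the annulus'') argument: given a $(1,1)$-tangle $T$ presenting $\tilde L$, one can isotope, in the ambient tangle category $\catTangles_\fieldk$, the little arc that was cut so that the cut point travels all the way around the component; passing past each crossing and each strand is a planar isotopy or a Reidemeister move in $\catTangles_\fieldk$, hence an equality in the quotient $\uBr(\beta)$. This exhibits $\tilde L$ and $\tilde L'$ as related by a sequence of such moves, so they are equal in $\Hom_{\uBr(\beta)}(\uparrow,\uparrow)$. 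Concretely one can phrase this via the cap/cup morphisms and the two ``curl'' relations~\eqref{eq:54}: closing $\tilde L$ off on the left versus on the right and then re-opening produces the other cut, and the discrepancy is absorbed by the invertible scalars $q^{\pm\beta}$ appearing in~\eqref{eq:54}, which moreover cancel in pairs because the framing is not changed by the operation.

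The main obstacle I anticipate is bookkeeping rather than conceptual: one must be careful that sliding a cut point past a crossing does not change the writhe/framing, and that the two curl relations in~\eqref{eq:54} are applied with matching signs so that the scalar corrections telescope to $1$; a clean way to avoid this is to work with the \emph{framing-corrected} version of the tangle class, so that $\tilde L$ is literally invariant under the ambient isotopies used. A secondary subtlety is that an isotopy of $L$ that moves the cut point around the component is not a priori an isotopy rel boundary of the $(1,1)$-tangle, so one genuinely needs the rank-one statement $\End_{\uBr(\beta)}(\uparrow)\simeq\fieldk$ (from Proposition~\ref{prop:DDS}) to identify the resulting scalar multiples: the sliding argument shows $[\tilde L]$ and $[\tilde L']$ are \emph{scalar multiples} of one another, and then one pins down the scalar to be $1$ by closing both up and using $\End_{\uBr(\beta)}(\emptyset)\simeq\fieldk$ together with Corollary~\ref{cor:endBr}.
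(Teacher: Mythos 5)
Your operative argument---both cuttings give scalars $c,c'$ in the rank-one module $\End_{\uBr(\beta)}(\uparrow)\simeq\fieldk$, closing either one up returns $[L]$, so $c\cdot[\beta]=[L]=c'\cdot[\beta]$ and hence $c=c'$---is exactly the paper's proof, which records this as $[\tilde{L}]_\beta=[L]_\beta/[\,\text{unknot}\,]_\beta$. But as written it has a genuine gap at precisely the specialization for which the reduced invariant was introduced: at $\beta=0$ (the Alexander polynomial) the unknot evaluates to $[\beta]=[0]=0$, so closing up yields $0=0$ and the cancellation step fails; your proposal never addresses this. The paper closes the gap by proving the equality for generic $\beta$, where $[\beta]\neq 0$ can be cancelled in the integral domain $\C(q)[q^{\pm\beta}]$, and then obtaining all specializations (including $\beta=0$ and $\beta=n$) by applying the specialization map to the already-established identity. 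You need to add that step, or replace the argument entirely in the degenerate case.

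Your primary route---sliding the cut point around the component---is not justified as stated: moving an endpoint of a $(1,1)$-tangle past a crossing is not an isotopy rel boundary, so it is \emph{not} ``an equality in the quotient $\uBr(\beta)$''; you concede this in your last paragraph, at which point the sliding and the reduction to adjacent cut points do no work (any two elements of a rank-one free module are trivially proportional, no topology needed), and you fall back on the closing-up argument above. A rigorous partial-trace/sliding proof is possible and would have the virtue of working uniformly in $\beta$, including $\beta=0$; this is the ``somewhat trickier'' route mentioned in the remark following the proposition and carried out in \cite{SarAlexander}, but it is not what you have written.
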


\begin{proof}
  For $\beta$ generic,
  \[
    [\tilde{L}]_\beta=\frac{[L]_\beta}{\left[
      \begin{tikzpicture}[anchorbase,scale=.4]
        \draw [thick] (0,0) circle (1);
      \end{tikzpicture}
    \right]_\beta}
    \]
    which can be seen by reclosing the $(1,1)$ tangle. In the expression above, the denominator is non-vanishing, and thus the value of $[\tilde{L}]$ does not depend on a specific choice of $\tilde{L}$ for a given $L$.

For other values of $\beta$, the result follows by specialization.
\end{proof}

\begin{remark}
  The global approach permitted by the Brauer category makes the above proof very easy. If one were to only consider the Alexander polynomial on its own, this proof would become somewhat trickier (see \cite{SarAlexander} for example).
  \end{remark}

\subsection{The scalar principle}

Consider the situation, schematized below, of a functor from the tangle category to some $\fieldk$-linear category $\mathcal{C}$ factoring through the Brauer category:
\[
\begin{tikzpicture}[anchorbase, scale=.8]
  \node  (A) at (0,0) {$\catTangles$};
  \node (B) at (4,0) {$\uBr_{\beta\; \text{or} \; n}$};
  \node (C) at (5,-2) {$\mathcal{C}$};
  \draw [->] (A)  -- (C) node[midway,below]{$\phi$};
  \draw [->] (A) -- (B);
  \draw [dashed,->] (B) -- (C) node[midway, above] {$\psi$};
\end{tikzpicture}
\]
Then:
\begin{itemize}
\item $\phi(L)$ is the HOMFLY-PT polynomial if $\beta$ is generic and the $\sln$ Reshetikhin-Turaev invariant for $\beta=n>0$;
  \item $\phi(\tilde{L})$ is the reduced HOMFLY-PT polynomial if $\beta$ is generic, the reduced $\sln$ Reshetikhin-Turaev invariant if $\beta=n>0$, and the Alexander polynomial if $\beta=0$.
\end{itemize}

Note however that this principle is specific to links, and that there could (and will) be more room for novelty in the tangle case.

\section{A representation-theoretic functor}\label{sec:RT}

In this section, we will define families of functors from $\uBr(\beta=d)$ to categories of representations of some quantum groups, that is, categories of vectors spaces with extra structure. This will naturally produce tangle invariants.

\subsection{Definition of $U_q(\gl_{m|n})$}

Quantum groups arise as deformations of enveloping algebras of some Lie algebras (see \cite{Lus4,ChariPressley,Kassel} for complete accounts on the theory). In the case of $\sln$, they can be thought of as $q$-deformed versions of Lie algebras originating from matrix spaces, and the action on the vector space is still very present in the theory. In the super case we are going to consider, instead of starting with endomorphisms of a vector space, one starts with endomorphisms of a super-vector space, that is, a $\Z/2\Z$-graded vector space.

Following this general idea, for a pair of non-negative integers $(m,n)$, we define a degree function:
\begin{align*}
  \{1,\cdots, m+n\}&\mapsto \Z/2\Z \\
               i&\mapsto |i|
\end{align*}
that assigns $0$ to the first $m$ entries and $1$ to the last $n$ entries. We take the following definition, which is similar to the ones from \cite{Scheunert,Zhang_super_qgroup}.

\begin{definition}
The \emph{quantum enveloping superalgebra} $U_q(\gl_{m|n})$ is defined to be the unital superalgebra
over $\C (q)$ with generators $E_i$, $F_i$ for $i\in \{1,\dots,m+n-1\}$ and $L_i$  for $i \in \{1,\cdots, m+n\}$
subject to the following relations (we introduce $K_i=L_i^{(-1)^{|i|}}L_{i+1}^{-(-1)^{|i+1|}}$): 
\vspace{\abovedisplayskip}
\begin{subequations}
\begingroup
\setlength{\belowdisplayskip}{\jot}
\setlength{\belowdisplayshortskip}{\jot} 
\setlength{\abovedisplayskip}{0pt}
\setlength{\abovedisplayshortskip}{0pt}
  \begin{align}
    L_i E_i &=qE_i L_i, & L_i F_i&=q^{-1}F_iL_i\\
        L_{i} E_{i+1} &=q^{-1}E_{i+1} L_i, & L_i F_{i+1}&=qF_{i+1}L_i
\label{eq:13}
\end{align}
\begin{align}(-1)^{\abs{i}} E_iF_i - (-1)^{\abs{i+1}} F_iE_i =  \frac{K_i-K_i^{-1}}{q-q^{-1}}\label{eq:14}
\end{align}
\begin{align}
E_m^2 & =F_m^2=0\label{eq:15}\\
E_iE_j=E_jE_i &\text{ and } F_iF_j = F_jF_i &&\text{if } \abs{i-j}\geq 2\label{eq:16}\\
E_iF_j&=F_jE_i &&\text{if } i \neq j\label{eq:17}
\end{align}
\begin{align}
E_i^2 E_{i+1} - [2] E_i E_{i+1} E_i + E_{i+1} E_i^2 &= 0 &\text{ for } i,i+1 \neq m \label{eq:18}\\
E_{i+1}^2 E_{i} - [2] E_{i+1} E_{i} E_{i+1} + E_{i} E_{i+1}^2& = 0 &\text{ for } i,i+1 \neq m\label{eq:19}\\
F_i^2 F_{i+1} - [2] F_i F_{i+1} F_i + F_{i+1} F_i^2 &= 0 &\text{ for } i,i+1 \neq m\label{eq:20}\\
F_{i+1}^2 F_{i} - [2] F_{i+1} F_{i} F_{i+1} + F_{i} F_{i+1}^2 &= 0 &\text{ for } i,i+1 \neq m\label{eq:21}
\end{align}
\begin{gather}
\begin{multlined}
  E_m E_{m-1} E_m E_{m+1} + E_{m-1} E_m E_{m+1} E_m + E_m
  E_{m+1} E_m E_{m-1} \\+ E_{m+1} E_m E_{m-1} E_m - [2] E_m
  E_{m-1} E_{m+1} E_m = 0
\end{multlined}
\label{eq:22}\\
\begin{multlined}
  F_m F_{m-1} F_m F_{m+1} + F_{m-1} F_m F_{m+1} F_m + F_m
  F_{m+1} F_m F_{m-1} \\+ F_{m+1} F_m F_{m-1} F_m - [2] F_m
  F_{m-1} F_{m+1} F_m = 0
\end{multlined}
\label{eq:23}
\end{gather}
\endgroup
\end{subequations}
\vspace{\belowdisplayskip}
\end{definition}

Note that the non-super case is recovered by taking $n=0$, in which case Relation~\eqref{eq:14} specializes to the usual relation:
\[
E_iF_i-F_iE_i=\frac{K_i-K_i^{-1}}{q-q^{-1}}
\]
The complicated equations~\eqref{eq:22} and \eqref{eq:23} disappear.

We define a \emph{comultiplication} $\Delta\colon U_q \mapto U_q \otimes U_q$, a \emph{counit} $\counit\colon U_q \mapto \C(q)$ and an \emph{antipode} $S\colon U_q \mapto U_q$ by setting on the generators:
\begin{equation}
\begin{aligned}
  \Delta(E_i)&= E_i \otimes K_i^{-1}+1 \otimes E_i, & \Delta(F_i)&=F_i \otimes 1 + K_i \otimes F_i\\
  S(E_i)&=-E_iK_i, & S(F_i)&=- K_i^{-1}F_i\\
  \Delta(L_i)&=L_i \otimes L_i, &   S(L_i)&=L_i^{-1}\\
  \counit(E_i)& =\counit(F_i)=0, & \counit(L_i)&=1
\end{aligned}\label{eq:24}
\end{equation}
and extending $\Delta$ and $\counit$ to superalgebra homomorphisms and $S$ to a superalgebra anti-homomorphism.

These maps endow the algebra with the structure of a Hopf algebra. The precise axioms of Hopf algebras won't play a front stage role in what follows, but they could be summarized by saying that they allow to define the notion of tensor product of representations and the notion of the dual of a representation: given $V$ and $W$ representations of $U_q(\gl_{m|n})$, one can define $U_q(\gl_{m|n})$ actions on $V\otimes W$ and $V^{\ast}$ by letting:
\begin{gather}
  x\in U_q(\gl_{m|n})\;\text{acts on}\; V\otimes W\;\text{by}\;\Delta(x),\\
  \text{and on}\; f\in V^{\ast}\text{via}\;x(f)(v)=f(S(x)v),\;\forall v\in V.
\end{gather}

Just as for the usual general or special linear groups of matrices, one can define a subalgebra $U_q(\slnn{m|n})$ which will sometimes be more handy for topological applications.

\begin{definition} \label{def:slmn}
  The quantum enveloping superalgebra $U_q(\slnn{m|n})$ is the subalgebra of $U_q(\gl_{m|n})$ generated by the $E_i$'s, $F_i$'s and $K_i$'s.
\end{definition}

\subsection{Representations}

As stated earlier, the main idea at play here is that $U_q(\gl_{m|n})$ is a deformed version of $\End(\C^{m|n})$. A $q$-deformed version of $\C^{m|n}$ will thus be central in the story.

Denoting $\C_q:=\C(q)$, let $V:=\C_q^{m|n}$, that is, an $m+n$ dimensional vector space with distinguished basis vectors $x_1,\dots, x_m$ of $\Z/2\Z$-degree zero, and $x_{m+1},\dots,x_{m+n}$ of degree one. An action of $U_q(\gl_{m|n})$ on $V$ can be described by:
\begin{gather}
  E_i x_j=\delta_{i+1,j}x_i,\quad F_i x_j=\delta_{i,j}x_{i+1} \\
  L_ix_i=q x_i,\quad L_jx_i=x_i \;\text{if}\;j\neq i
\end{gather}

From there, one can deduce:
\begin{gather}
  K_iv_i=q v_i\; \text{and}\; K_iv_{i+1}=q^{-1}v_{i+1} \; \text{for}\; i<m\\
  K_mv_m=q v_m\; \text{and}\; K_mv_{m+1}=qv_{m+1}\\
  K_iv_i=q^{-1} v_i\; \text{and}\; K_iv_{i+1}=qv_{i+1} \; \text{for}\; i>m\\
\end{gather}
The middle line is of course key in understanding the difference between the usual and super cases.

Using the definition of the action of the dual representation, one can also derive:
\begin{align}
  E_ix_i^\ast&= -q^{-1}x_{i+1}^\ast\;\text{if}\; i<m \\
  E_ix_i^\ast&= -qx_{i+1}^\ast\;\text{if}\; i\geq m \\
  K_ix_i^\ast&= q^{-1}x_{i}^\ast\;\text{if}\; i\leq m \\
  K_ix_i^\ast&= qx_{i}^\ast\;\text{if}\; i> m \\
  K_ix_{i+1}^\ast&= qx_{i+1}^\ast\;\text{if}\; i<m \\
  K_ix_{i+1}^\ast&= q^{-1}x_{i+1}^\ast\;\text{if}\; i\geq m 
\end{align}

These basic pieces can be used to produce several tensor categories:
\begin{itemize}
\item $\catRep(U_q(\gl_{m|n}))$: the monoidal category generated by $V$ and $V^\ast$;
\item $\catRep^{\uparrow}(U_q(\gl_{m|n}))$: the monoidal category generated by $V$.
\end{itemize}

In the case when $n=0$, these two categories are actually very close: they become equivalent once one restricts to $U_q(\slnn{m})$.

\subsection{The Reshetikhin-Turaev functor}

Reshetikhin and Turaev defined a functor from the category of tangles to the category of representations of $U_q(\sln)$, that in the $\slnn{2}$ case encompasses the Jones polynomial. The invariants of links thus defined were then extended to 3-manifolds after specialization at roots of unity, yielding the so-called Witten-Reshetikhin-Turaev invariant \cite{Witten3,RT}. In the super case, the definition of the functor can be traced back to Zhang~\cite{Zhang} and Geer-Patureau~\cite{GP_supertrace}, the extension of which to 3-manifolds is due to Blanchet, Costantino, Geer and Patureau-Mirand~\cite{CGP, BCGP}.

All of these functors factor through the Brauer category with $\beta=m-n$, yielding a unified presentation for them all. Most sources give just enough of the definition so that it can be extended to all generators. Here I've tried to list a little bit more, hoping it can be useful to someone wishing to do explicit computations.\footnote{I also hope by this process to suppress one recurring issue: running these computations on a small piece of paper, storing it somewhere, forgetting where exactly, and having to run the same computations again a year later for the next project.}

On objects, the functor sends $\uparrow$ to $V$, and its dual $\downarrow$ to $V^{\ast}$.

On morphisms, one sends cups to the following maps:
\begin{align}
\begin{tikzpicture}[anchorbase,scale=.5]
    \draw [->] (1,1) -- (1,.5) to [out=-90,in=-90] (0,.5) -- (0,1);
\end{tikzpicture}
\longrightarrow
\quad
& \C_q\rightarrow V\otimes V^{\ast} \nonumber \\
& 1\rightarrow \sum_{k=1}^{m+n}x_k\otimes x_{k}^\ast
\\
\begin{tikzpicture}[anchorbase,scale=.5]
    \draw [<-] (1,1) -- (1,.5) to [out=-90,in=-90] (0,.5) -- (0,1);
\end{tikzpicture}
\longrightarrow
\quad
& \C_q\rightarrow V\otimes V^{\ast}  \nonumber \\
&  1 \rightarrow q^{m-n}(\sum_{k=1}^{m}q^{1-2k} x_k^\ast\otimes x_{k}-\sum_{k=m+1}^{m+n}q^{1-2k-4m}x_k^{\ast}\otimes x_k)
\end{align}
and caps are handled as follows:
\begin{align}
\begin{tikzpicture}[anchorbase,scale=.5]
    \draw [->] (1,-1) -- (1,-.5) to [out=90,in=90] (0,-.5) -- (0,-1);
\end{tikzpicture}
\longrightarrow
\quad
&V\otimes V^\ast \rightarrow \C_q \nonumber \\
& x_k\otimes x_{k}^\ast \rightarrow 1
\\
\begin{tikzpicture}[anchorbase,scale=.5]
    \draw [<-] (1,-1) -- (1,-.5) to [out=90,in=90] (0,-.5) -- (0,-1);
\end{tikzpicture}
\longrightarrow
\quad
&V\otimes V^\ast \rightarrow \C_q \nonumber \\
& x_k\otimes x_{k}^\ast \rightarrow q^{-m+n-1+2k}\;\text{for}\;k\leq m\\
& x_k\otimes x_{k}^\ast \rightarrow -q^{3m+n+1-2k}\;\text{for}\;k>m
\end{align}

The doubtful reader might want to check that clockwise and counter-clockwise oriented circles do get sent to $[m-n]$.

Finally, the crossings get sent to the so-called quantum $R$-matrix:
\begin{align}
     \begin{tikzpicture}[smallnodes,anchorbase,xscale=0.7,yscale=0.5]
   \draw[uno] (1,0)  -- ++(0,0.3) \midarrow .. controls ++(0,0.7) and ++(0,-0.7) .. ++(-1,1.4) -- ++(0,0.3)  \midarrow ;
   \draw[uno,cross line] (0,0)  -- ++(0,0.3) \midarrow .. controls ++(0,0.7) and ++(0,-0.7) .. ++(1,1.4) -- ++(0,0.3)  \midarrow ;
     \end{tikzpicture}
     \longrightarrow \quad & V\otimes V\rightarrow V\otimes V \\
     & x_i\otimes x_j \rightarrow
       \begin{cases}
    q^{-1} x_i \otimes x_i & \text{if } i = j \leq m\\
    (-1)^{\abs{i}\abs{j}} x_j \otimes x_i & \text{if } i < j\\
    (-1)^{\abs{i}\abs{j}} x_j \otimes x_i + (q^{-1}- q) x_i \otimes x_j & \text{if } i > j\\
    -q x_i \otimes x_i & \text{if } i = j > m
  \end{cases}
\end{align}

The following theorem summarizes works of Reshetikhin-Turaev~\cite{RT}, Geer-Patureau~\cite{GP_supertrace} and Zhang~\cite{Zhang}. It can be directly proved by explicit computation using the previous formulas.

\begin{theorem}
  The above map induces a functor:
  \[
\uBr(\beta)\mapsto \catRep(U_q(\gl_{m|n}))
  \]
\end{theorem}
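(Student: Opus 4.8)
The plan is to verify directly that the assignment given on the generating morphisms of $\catTangles_\fieldk$ descends to a well-defined monoidal functor on the quotient $\uBr(\beta)$ with $\beta = m-n$. Since $\catTangles$ is generated as a monoidal category by the objects $\uparrow,\downarrow$ and the cup/cap/crossing morphisms, and since we are given the images of these generators into $\catRep(U_q(\gl_{m|n}))$, the only thing to check is that (i) each of these linear maps is genuinely a morphism of $U_q(\gl_{m|n})$-modules, i.e.\ it intertwines the relevant actions (obtained via $\Delta$ and $S$ from the formulas recorded in the previous subsection), and (ii) the images satisfy the defining relations of $\catTangles_\fieldk$ — the isotopy (zig-zag/duality) relations for cups and caps, the braid relations, and naturality of the crossing with respect to cups and caps — together with the extra relations \eqref{eq:5} defining $\uBr(\beta)$. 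Point (ii) for the isotopy and braid relations is exactly the content of the Reshetikhin--Turaev formalism: the $R$-matrix satisfies the quantum Yang--Baxter equation and the duality morphisms are compatible with it, which is where the cited works of Reshetikhin--Turaev, Geer--Patureau and Zhang enter.

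Concretely, I would proceed as follows. First I would check $U_q$-equivariance of the four cup/cap maps: for the coevaluation $1\mapsto \sum_k x_k\otimes x_k^\ast$ one checks $E_i,F_i,L_i$ act as zero resp.\ as on the trivial module, using $\Delta(E_i)=E_i\otimes K_i^{-1}+1\otimes E_i$ and the dual action $x(f)(v)=f(S(x)v)$; the sign $(-1)^{|i|}$ bookkeeping in \eqref{eq:14} is what forces the precise coefficients $q^{1-2k}$ and $q^{1-2k-4m}$ in the rotated coevaluation, and similarly for the two evaluations. Then I would check the zig-zag identities: composing a coevaluation with an evaluation along the middle strand must give $\id_V$ or $\id_{V^\ast}$, and composing the ``wrong-handed'' pair must produce the scalars $q^{\pm\beta}$ appearing in \eqref{eq:54} — this is a short bookkeeping of the powers of $q$ in the two types of cup and cap, and it simultaneously shows the clockwise and counterclockwise circles evaluate to $\sum_k q^{(\pm)} = [m-n] = [\beta]$, which is relation \eqref{eq:52}. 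Next I would verify that the displayed $R$-matrix is $U_q$-equivariant on $V\otimes V$ (a finite case check on the four cases $i<j$, $i>j$, $i=j\le m$, $i=j>m$, against $\Delta$ of each $E_i,F_i,L_i$), that it satisfies the Yang--Baxter relation (the $R_{III}$ relation), that it is invertible with the inverse dictated by $R_I'$/$R_{II}$, and — crucially — that together with the cups and caps it satisfies the relation first line of \eqref{eq:5}, namely $R - R^{-1} = (q^{-1}-q)\,\id$ on $V\otimes V$; the latter is immediate from the case analysis since $R$ differs from the flip (up to signs) only by the $(q^{-1}-q)$ term in the $i>j$ case and the diagonal eigenvalues $q^{-1}$ and $-q$. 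Finally I would check naturality of the crossing with respect to bending a strand (sliding a cap/cup past a crossing), which is what identifies the image of the ``curl'' morphisms and pins down the twist eigenvalues, again consistent with \eqref{eq:54}.

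Since $\catTangles_\fieldk$ is the free $\fieldk$-linear monoidal category on these generators modulo the isotopy and Reidemeister relations, and all of those relations have now been checked to hold among the images, the assignment extends uniquely to a monoidal functor $F\colon \catTangles_\fieldk \to \catRep(U_q(\gl_{m|n}))$; and because the additional relations \eqref{eq:5} defining $\uBr(\beta)$ (with $\beta=m-n$) have been checked as well, $F$ factors through the quotient to give the desired functor $\uBr(\beta)\to \catRep(U_q(\gl_{m|n}))$.

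I expect the main obstacle to be purely computational rather than conceptual: carrying out the $U_q(\gl_{m|n})$-equivariance checks with the super signs $(-1)^{|i||j|}$ and the asymmetric normalizations (the $q^{1-2k}$ versus $q^{1-2k-4m}$ in the rotated coevaluation, and the $q^{-m+n-1+2k}$ versus $-q^{3m+n+1-2k}$ in the rotated cap) is delicate, and it is exactly here that an error would slip in — one must be scrupulous about the Koszul sign rule in $V\otimes W$ and about which side $S$ acts on for the dual. The braid/Yang--Baxter and zig-zag relations themselves are standard for this $R$-matrix; the real care is in confirming that the specific coefficient choices above are the ones making every diagram $U_q$-linear and making the circle evaluate to $[m-n]$, so that the whole package is consistent and the factorization through $\uBr(m-n)$ genuinely holds.
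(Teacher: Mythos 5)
Your proposal is correct and follows essentially the same route as the paper, which states only that the theorem ``can be directly proved by explicit computation using the previous formulas'' (citing Reshetikhin--Turaev, Geer--Patureau and Zhang for the details); your outline simply spells out which computations those are — equivariance of the cup/cap/$R$-matrix generators, the Yang--Baxter and zig-zag identities, and the defining relations \eqref{eq:5} of $\uBr(\beta)$ with $\beta=m-n$. Nothing further is needed.
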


One thus extracts tangle invariants from there.

\begin{definition}
  The $\gl_{m|n}$ Reshetikhin-Turaev invariant of a tangle is its image under the composition:
  \[
\catTangles\mapsto \uBr(\beta=m-n)\mapsto \catRep(U_q(\gl_{m|n}))
  \]
\end{definition}

\begin{remark}
  For $K$ a knot or link, the invariant associated to $K$ lives in
  \[
\End_{\catRep(U_q(\gl_{m|n}))}(\C_q)\simeq \C_q
\]
and can be shown to be a Laurent polynomial. On the other hand, for a more general tangle, the morphism space that contains the invariant is not $1$-dimensional anymore. Being able to perform actual computations in these spaces is a bit of a challenge.
  \end{remark}

The following corollary recovers a well-known phenomenon.

\begin{corollary}[of the scalar principle]
  The $\gl_{m|n}$ Reshetikhin invariant of a link only depends on $d=m-n$.
\end{corollary}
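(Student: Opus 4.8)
The plan is to combine the Reshetikhin--Turaev functor of the previous theorem with the ``scalar principle'' exactly as it was set up in Section~\ref{sec:knots}. The key observation is that the functor just constructed, $\catTangles \to \uBr(\beta) \to \catRep(U_q(\gl_{m|n}))$, visibly \emph{factors through} $\uBr(\beta)$ with the specialization $\beta = m-n$; this is precisely the content of the theorem stated immediately above, together with the definition of the $\gl_{m|n}$ Reshetikhin--Turaev invariant. So we are exactly in the situation of the diagram drawn in the subsection ``The scalar principle'', with $\mathcal{C} = \catRep(U_q(\gl_{m|n}))$, $\phi$ the composite functor, and $\psi$ the functor $\uBr(\beta=m-n) \to \catRep(U_q(\gl_{m|n}))$.

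First I would recall that for a link $L$ viewed as an endomorphism of the monoidal unit $\emptyset$ in $\catTangles$, its image in $\uBr(\beta)$ lands in $\End_{\uBr(\beta)}(\emptyset)$, which by Corollary~\ref{cor:endBr} is isomorphic to $\fieldk$, and hence (after specializing $\beta = m-n \in \Z$) in $\End_{\uBr(\beta=m-n)}(\emptyset) \simeq \C(q)$. Under $\psi$ this scalar is carried to the corresponding scalar multiple of the identity of $\C_q$ in $\End_{\catRep(U_q(\gl_{m|n}))}(\C_q) \simeq \C_q$. Thus the $\gl_{m|n}$ Reshetikhin--Turaev invariant of $L$ equals the image of the class $[L] \in \End_{\uBr(\beta=m-n)}(\emptyset) \simeq \C(q)$ under the (identity) isomorphism; in particular it is \emph{computed entirely inside} $\uBr(\beta=m-n)$, i.e.\ it depends only on the value of $\beta$ used in the factorization, which is $m-n$.

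To finish, I would simply note that if $(m,n)$ and $(m',n')$ are two pairs of non-negative integers with $m-n = m'-n'$, then both functors $\catTangles \to \uBr(\beta=m-n) = \uBr(\beta=m'-n')$ agree (the first arrow is the same quotient functor, and the value $\beta = m-n$ of the parameter is the same), so the scalars $[L] \in \End_{\uBr(\beta=m-n)}(\emptyset)$ assigned to a link $L$ coincide. Pushing this common scalar forward along either $\psi$ then shows the two $\gl_{m|n}$ and $\gl_{m'|n'}$ Reshetikhin--Turaev invariants of $L$ are equal. Hence the invariant depends only on $d = m-n$.

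The only genuine subtlety — the ``main obstacle'' — is logical rather than computational: it is the circular-looking reliance on Corollary~\ref{cor:endBr} (equivalently Proposition~\ref{prop:DDS}), whose honest proof in \cite{DDS} invokes the existence of the HOMFLY--PT polynomial, which in this exposition is itself being derived from $\uBr(\beta)$. For the purposes of this corollary one does not need the full strength of Proposition~\ref{prop:DDS}: the easy direction sketched after Corollary~\ref{cor:endBr} — switching crossings via the left-hand side of \eqref{eq:52}, removing framing curls via \eqref{eq:54}, and deleting circles via the right-hand side of \eqref{eq:52} — already shows $\End_{\uBr(\beta=m-n)}(\emptyset)$ is spanned by the empty diagram, and that is all that is used here. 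So the corollary rests only on this elementary reduction together with the functoriality established in the theorem above, and the rest is the formal ``scalar principle'' argument.
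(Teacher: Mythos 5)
Your argument is correct and is essentially the paper's own: the paper offers no separate proof, treating the statement as an immediate instance of the scalar principle once the preceding theorem shows the Reshetikhin--Turaev functor factors through $\uBr(\beta=m-n)$, which is exactly what you spell out. Your closing observation that only the easy spanning direction of Corollary~\ref{cor:endBr} (not the full strength of Proposition~\ref{prop:DDS}) is needed is a correct and worthwhile clarification, but it does not change the route.
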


This also shows that the definition above agrees with the one presented in Definition~\ref{def:1} in the case of $\gl_{m|0}$.

Let us now go back to Remark~\ref{rem:tangleBrRT} and comment on the difference between the invariant associated to a tangle in the Brauer category and its representation-theoretic version.

One can show (for example using ideas from Schur--Weyl duality, which will be the focus of the next section) that the following situation:
\[
\begin{tikzpicture}[anchorbase]
  \node (A) at (0,0) {$\uBr(\beta=m-n)$};
  \node (B) at (5,1) {$\catRep(U_q(\gl_{m|n})$};
  \node (C) at (5,-1) {$\catRep(U_q(\gl_{m+1|n+1})$};
  \draw [->] (A)  -- (B) node[midway,below]{$\phi_{m|n}$};
  \draw [->] (A)  -- (C) node[midway,below]{$\phi_{m+1|n+1}$};  
\end{tikzpicture}
\]
yields the following inclusions: $ker(\phi_{m+1|n+1})\subset ker(\phi_{m|n})$, but the kernels can differ. However, if one fixes the number of strands, they stabilize for large values of $m$ and $n$, and $\uBr(\beta=d)$ can thus be thought of as the ``inverse limit'' of the $\gl_{m|n}$ invariants for fixed values of $m-n=d$.

\section{Schur--Weyl and skew Howe dualities}\label{sec:SW}

The observation motivating this section is that it is a hard task to perform computations in $\catRep(U_q(\gl_{m|n}))$. On the other hand, $\uBr(\beta)$ is somewhat easier to manipulate, but one gives up some of the algebraic aspects by going to this more pictorial, topological version. So we want to find some algebraic object, with an easy definition, that will give a firmer hand on the maps that are used to define the Reshetikhin-Turaev invariant. Furthermore, we will see that the proposed solution to that problem has the nice feature of extending to the generic $\beta$ case as well, yielding a united, quantum-group-based definition of the HOMFLY-PT polynomial together with its specialization.

To make the definition simpler, we will restrict to braids and their closures. This is not strictly necessary, but will allow us to use much lighter notations. Two kinds of dualities, not unrelated to one another, are at play: Schur--Weyl duality and skew Howe duality. The following picture (where $B$ stands for a braid) aims at giving a quick idea about their appearance in our situation.

\[
\begin{tikzpicture}[anchorbase,scale=.6]
  \node at (0,0) {$B$};
  \draw (-1,-1) rectangle (1,1);
  \draw [->] (.8,1) -- (.8,1.5) to [out=90,in=90] (1.7,1.5) -- (1.7,-1.5) to [out=-90, in=-90] (.8,-1.5) -- (.8,-1);
  \draw [->] (-.8,1) -- (-.8,1.5) to [out=90,in=90] (3.3,1.5) -- (3.3,-1.5) to [out=-90, in=-90] (-.8,-1.5) -- (-.8,-1);
  \node at (0,1.3) {$\cdots$};
  \draw [decorate,decoration={brace,amplitude=6pt},xshift=-2pt,yshift=0pt] (-1,-1) -- (-1,1)node [black,midway,xshift=-30pt] {\small Schur--Weyl};
  \draw [decorate,decoration={brace,amplitude=6pt},xshift=-2pt,yshift=0pt] (3.5,2.5) -- (3.5,-2.5)node [black,midway,xshift=30pt] {\small skew Howe};
\end{tikzpicture}
\]

\subsection{Hecke algebra}

\begin{definition}
  We let $\calH_N:=\End_{\uBr(\beta=d)}(\uparrow^{\otimes N})$.
\end{definition}

It appears that the definition above does not depend on $d$ (this mostly follows from Proposition~\ref{prop:DDS}). The reason to go through the trouble of choosing $d$ is that in the case where $\beta$ is generic, the base field is larger than the usual $\C_q$ we are looking for. Setting $\beta=d$ allows to recover the desired base field.

An important feature, which holds as well for generic values of $\beta$, is that $\End_{\uBr(\beta)}$ is spanned by braid diagrams. Equivalently (and perhaps in a more down-to-earth manner), one could consider in $\uBr(\beta)$ the $\C_q$-span of braid diagrams in the same endomorphism space.

The choice of braids as spanning sets helps recovering a classical presentation for the Hecke algebra. If one denotes:
\[
T_i=\quad
 \begin{tikzpicture}[anchorbase,scale=.5]
  \begin{knot}[
      clip width=7,
      consider self intersections,
      end tolerance=1pt,
      flip crossing=,
    ]
    \draw [thick] (-1.5,0) -- (-1.5,1);
    \node at (-.75,.5) {$\cdots$};
    \strand [thick] (0,0) to [out=90,in=-90] (1,1);
    \strand [thick] (1,0) to [out=90,in=-90] (0,1);
    \node at (1.75,.5) {$\cdots$};
    \draw [thick] (2.5,0) -- (2.5,1);
    \node at (-1.5,-.3) {\tiny $1$};
    \node at (0,-.3) {\tiny $i$};
    \node at (1,-.3) {\tiny $i$+$1$};
    \node at (2.5,-.3) {\tiny $N$};
  \end{knot}
\end{tikzpicture}
 \in \calH_N
\]
then $\calH_N$ can be presented by the generators $T_i$'s with relations:
\begin{gather}
  T_iT_{i+1}T_i=T_{i+1}T_{i}T_{i+1},\;1\leq i\leq N-2\\
  (T_i-q^{-1})(T_i+q)=0,\;1\leq i\leq N-1\\
  T_iT_j=T_jT_i,\; |i-j|\geq 2
\end{gather}

This recovers (a version of) the usual presentation for $\calH_N$. Just like braid groups surject onto symmetric groups, there is a surjection from Hecke algebras to group algebras of symmetric groups as follows.

\begin{remark}
  There is a surjective map:
  \begin{align}
    \calH_N &\twoheadrightarrow \C[\mathfrak{S}_N] \\
    T_i &\rightarrow (i,i+1)
  \end{align}
\end{remark}

\subsection{Schur--Weyl duality}

We have seen:

\[
\begin{tikzpicture}
  \node (A) at (0,0) {$\calH_N$};
  \node (B) at (4,0) {$\End_{U_q (\gl_{m|n})}$};
  \node (C) at (0,-3) {$\End_{\uBr(\beta=m-n)}(\uparrow^{\otimes N})$};
  \draw [->] (A) -- (B);
  \draw [right hook->] (A) -- (C);
  \draw [->] (C) -- (B);
\end{tikzpicture}
\]

This can be rephrased by saying that there are two commuting actions of $\calH_N$ and $U_q(\gl_{m|n})$ on $V^{\otimes N}$:
\[
\mathcal{H}_N\quad
\begin{tikzpicture}[scale=.5,anchorbase]
  \draw [->] (0,-.2) to [out=-120,in=-90] (-1,0) to [out=90,in=120] (0,.2);
\end{tikzpicture}
\quad V^{\otimes N}
\quad
\begin{tikzpicture}[scale=.5,anchorbase]
  \draw [->] (0,-.2) to [out=-60,in=-90] (1,0) to [out=90,in=60] (0,.2);
\end{tikzpicture}
\quad
U_q(\gl_{m|n})
\]

The classical version of this situation looks like:
\[
\mathfrak{S}_N\quad
\begin{tikzpicture}[scale=.5,anchorbase]
  \draw [->] (0,-.2) to [out=-120,in=-90] (-1,0) to [out=90,in=120] (0,.2);
\end{tikzpicture}
\quad (\C^m)^{\otimes N}
\quad
\begin{tikzpicture}[scale=.5,anchorbase]
  \draw [->] (0,-.2) to [out=-60,in=-90] (1,0) to [out=90,in=60] (0,.2);
\end{tikzpicture}
\quad
GL_m
\]
for which Schur--Weyl duality asserts that not only these two actions commute, but they generate each other commutant. In the quantum version, an analogous statement holds, as stated below.

\begin{theorem}
  The actions of $\calH_N$ and $U_q(\gl_{m|n})$ on $V^{\otimes N}$ commute with each other and generate each other centralizers.
\end{theorem}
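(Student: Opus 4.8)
The plan is to prove the two assertions of the theorem in sequence: first that the actions commute, then the double centralizer property. I would begin with the commutation. Both $\mathcal{H}_N$ and $U_q(\gl_{m|n})$ act on $V^{\otimes N}$: the quantum group acts via the iterated comultiplication $\Delta^{(N-1)}$, and $\mathcal{H}_N$ acts through the Reshetikhin--Turaev functor, so that the generator $T_i$ acts by the $R$-matrix $\check{R}$ in tensor slots $i,i+1$ (and the identity elsewhere). The commutation then reduces to the single statement that $\check{R}\colon V\otimes V\to V\otimes V$ is a $U_q(\gl_{m|n})$-intertwiner for the $\Delta$-action, i.e. that $\check{R}\,\Delta(x)=\Delta(x)\,\check{R}$ for all generators $x=E_i,F_i,L_i$. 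This is a finite explicit check using the formulas for the action on $V$, the action on $V\otimes V$ via $\Delta$ given in \eqref{eq:24}, and the case-by-case formula for $\check{R}$ listed just before the theorem; it is the ``directly proved by explicit computation'' content already invoked for the preceding theorem, so I would state it as such. One also needs that these local intertwiners in slots $i,i+1$ and $j,j+1$ commute when $|i-j|\ge 2$ (immediate, disjoint supports) and that they satisfy the braid and Hecke relations (again already guaranteed by the functor $\catTangles\to\uBr(\beta)\to\catRep$), so the $\mathcal{H}_N$-action is well defined and commutes with $U_q(\gl_{m|n})$.

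Next I would set up the double centralizer statement. Write $A$ for the image of $\mathbb{C}(q)[\mathcal{H}_N]$ in $\End_{\mathbb{C}(q)}(V^{\otimes N})$ and $B$ for the image of $U_q(\gl_{m|n})$. Commutation gives $A\subseteq \End_B(V^{\otimes N})$ and $B\subseteq \End_A(V^{\otimes N})$; the goal is equality in both. The cleanest route is to invoke semisimplicity: over $\mathbb{C}(q)$ the algebra $\mathcal{H}_N$ is semisimple (the parameter $q$ is generic), hence $A$ is a semisimple subalgebra of $\End(V^{\otimes N})$, and by the classical double centralizer theorem for semisimple algebras (Jacobson density / Artin--Wedderburn), $\End_{\End_A(V^{\otimes N})}(V^{\otimes N})=A$. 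Therefore it suffices to prove the single inclusion $\End_A(V^{\otimes N})\subseteq B$, i.e. that every endomorphism of $V^{\otimes N}$ commuting with all the $\check{R}$'s lies in the image of $U_q(\gl_{m|n})$; the reverse direction $\End_B(V^{\otimes N})=A$ then follows formally, and with semisimplicity of $A$ one also gets the explicit bimodule decomposition $V^{\otimes N}\cong\bigoplus_\lambda L_\lambda\boxtimes S_\lambda$ indexed by the relevant ($(m|n)$-admissible) partitions.

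For the remaining inclusion $\End_{\mathcal{H}_N}(V^{\otimes N})\subseteq B$ I see two natural strategies. The first is a deformation/specialization argument: the analogous statement for the symmetric group $\mathfrak{S}_N$ acting on $(\mathbb{C}^{m|n})^{\otimes N}$ with centralizer $U(\gl_{m|n})$ is classical (the super Schur--Weyl duality of Berele--Regev / Sergeev), and one upgrades it to the quantum level by a flatness argument — the dimensions of the relevant Hom-spaces are constant in $q$, so the classical equality forces the quantum one. The second is the skew Howe route hinted at in the text: realize $\End_{\mathcal{H}_N}(V^{\otimes N})$ inside a larger $U_q(\gl_k)$ (or $U_q(\gl_{k|l})$) picture and use that the two sides of skew Howe duality generate each other's commutant, transporting the statement across. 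The main obstacle is precisely this inclusion: bounding $\End_{\mathcal{H}_N}(V^{\otimes N})$ from above by $\dim B$ is the non-formal heart of the matter, since the commutation and the passage from one centralizer equality to the other are essentially automatic once semisimplicity of the Hecke algebra is in hand. I would therefore present the commutation as an explicit check, cite semisimplicity of $\mathcal{H}_N$ over $\mathbb{C}(q)$, reduce to the one inclusion via the abstract double centralizer theorem, and establish that inclusion by specialization from the known classical super Schur--Weyl duality (indicating the skew Howe alternative as the conceptual explanation that the later subsection will make precise).
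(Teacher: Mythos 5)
The paper does not prove this theorem at all: it states it and immediately defers to the references \cite{Etingof,Jimbo,Mitsuhashi} ``for more precise versions and more details.'' So there is no in-paper argument to compare against; your proposal should be judged against the standard proofs in that literature, and it is a faithful outline of them. The three-part structure --- (i) commutation reduces to the single local check that $\check{R}$ intertwines the $\Delta$-action on $V\otimes V$, (ii) semisimplicity of $\calH_N$ over $\C(q)$ plus the abstract double centralizer theorem reduces everything to the one inclusion $\End_{\calH_N}(V^{\otimes N})\subseteq \mathrm{im}\,U_q(\gl_{m|n})$, (iii) that inclusion is obtained by comparison with classical super Schur--Weyl duality (Berele--Regev/Sergeev) via a specialization argument --- is exactly how Jimbo handles $n=0$ and how Mitsuhashi handles the super case. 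You also correctly identify step (iii) as the only non-formal point.

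Two places deserve a word of caution so that the sketch does not read as more automatic than it is. First, in the super case the ``explicit check'' of (i) must track the Koszul signs: the action of $\Delta(x)$ on $V\otimes V$ and the $(-1)^{|i||j|}$ factors in the $R$-matrix have to be matched consistently, and this is where most sign errors occur. Second, the phrase ``the dimensions of the relevant Hom-spaces are constant in $q$'' is the conclusion of an argument, not its starting point: one needs an integral form of $V^{\otimes N}$ over $\C[q,q^{\pm1}]$ (or a localization thereof) specializing at $q=1$ to the signed permutation action, then one computes $\dim\End_{\calH_N}(V^{\otimes N})=\sum_\lambda m_\lambda^2$ from the generic semisimplicity of $\calH_N$ and shows the multiplicities $m_\lambda$ agree with the classical ones (e.g.\ by a character/trace computation on the basis $\{T_w\}$), while a semicontinuity-of-rank argument bounds $\dim B$ from below by its classical value. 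With those two points expanded, your outline becomes a complete proof.
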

We refer to~\cite{Etingof,Jimbo,Mitsuhashi} for more precise versions and more details.

In our context, what we want to retain from this is the existence of the following map:
\[
\calH_N\twoheadrightarrow \End_{U_q(\gl_{m|n})}(V^{\otimes N})
\]
Furthermore, it can be shown (and this is part of the more detailed statements of Schur--Weyl duality) that if $m\geq N$, this map is an isomorphism (for a more precise statement in the super case, see \cite[Theorem 5.1]{Mitsuhashi}).

This thus gives us an algebraic description of $\End_{U_q(\gl_{m|n})}(V^{\otimes N})$, in which the Reshetikhin-Turaev invariant lives: the Reshetikhin-Turaev map that to a braid assigns an intertwiner factors through $\calH_N$. However, in this version, only braids show up, which more or less means that we are lacking a notion of duality for the objects.

\subsection{Skew-Howe duality}

Here is the picture we are aiming at:

\[
\begin{tikzpicture}[anchorbase]
  \node at (-3.5,0) {$U_q(\gl_{m|n})$};
  \node at (-2,0) {
    $\begin{tikzpicture}[scale=.5,anchorbase]
  \draw [->] (0,-.2) to [out=-120,in=-90] (-1,0) to [out=90,in=120] (0,.2);
    \end{tikzpicture}$};
  \node at (0,0) {${\bigwedge_q}^N(\C_q^{m|n}\otimes \C_q^{k})$};
  \node at (2,0) {$
\begin{tikzpicture}[scale=.5,anchorbase]
  \draw [->] (0,-.2) to [out=-60,in=-90] (1,0) to [out=90,in=60] (0,.2);
\end{tikzpicture}
$};
\node at (3.5,0) {$
  U_q(\gl_{k})
  $};
\node [rotate=-90] at (0,-1) {$\simeq$};
\node at (0,-2) {$\bigoplus_{a_1+\cdots +a_k=N} {\bigwedge_q}^{a_1}(\C_q^{m|n})\otimes \cdots \otimes {\bigwedge_q}^{a_k}(\C_q^{m|n})$};
\end{tikzpicture}
\]

Note that we could have considered a slightly more general situation, allowing the quantum group on the right-hand side above to also be super (see~\cite[Theorem 4.2 and Proposition 4.3]{QS2}). The above version appears in~\cite[Theorem 2.2]{WuZ}.

From there one gets:
\[
U_q(\gl_{k})\twoheadrightarrow \End_{U_q(\gl_{m|n})}\left({\bigwedge}_q^N(\C_q^{m|n}\otimes \C_q^k)\right)
\]
The base space on the rhs above (or on the bottom line of the previous picture) contains tensor products of exterior powers of the vector representation, which, in the non-super case, encompasses both $V$ and $V^{\ast}$ (provided one goes from $\gl$ to $\slnn{}$). In other words, when $n=0$ the endomorphism space hit by $U_q(\gl_k)$ contains all we need to compute the Reshetikhin-Turaev invariant for any tangle.

Before going into more detail about the precise relationship with Reshetikhin-Turaev invariants, let us now explain the different pieces in the story, starting with quantum exterior power.

Recall that $V=\C_q^{m|n}$ has basis $\{x_i\}_{1\leq i\leq m+n}$. Define $I\subset V\otimes V$ to be the vector space spanned by:
\[\{x_i\otimes x_i,\;i\leq m\}\cup \{x_i\otimes x_j+(-1)^{|i||j|}q^{-1}x_j\otimes x_i,\;i<j\}\]
$I$ is a subrepresentation of $V^{\otimes 2}$, and one can thus define a quotient representation as follows.

\begin{definition}
One defines ${\bigwedge}_q^l(V)$ as follows.
  \[
{\bigwedge}_q^l(V):=V^{\otimes l}/\langle V^{\otimes s}\otimes I\otimes V^{l-s-2},\; 0\leq s\leq l-2\rangle
  \]
\end{definition}

This defines the notion of quantum exterior power, which $q$-deforms the classical exterior power of a representation.

It can be shown that this yields an irreducible representation of $U_q(\gl_{m|n})$, and by construction it appears as a subrepresentation of $V^{\otimes l}$, meaning that there exist $\iota$ and $\pi$:
\[
V^{\otimes l}\xrightarrow{\pi}{\bigwedge}_q^l(V)\xrightarrow{\iota} V^{\otimes l}
\]
with $p_{\wedge}:=\iota\circ \pi$ an idempotent. An important point is that $p_{\wedge}$ is realized by an element of the Hecke algebra. The formula for it appears for example in~\cite[p. 250]{Jimbo}:
\[
\frac{1}{[l]!}\sum_{w\in \mathfrak{S}_l}(-1)^{\ell(w)}q^{-\frac{l(l-1)}{2}+\ell(w)}T_w
\]
where $\ell(w)$ is the length of the word $w\in \mathfrak{S}_N$.

We can now come to the statement of quantum skew Howe duality, referring to~\cite{Howe,BerensteinZwicknagl,CKM,ChengWang,WuZ,QS2} for more details:
\begin{theorem}
  In the following situation:
  \[
U_q(\gl_{m|n}) \quad
\begin{tikzpicture}[scale=.5,anchorbase]
  \draw [->] (0,-.2) to [out=-120,in=-90] (-1,0) to [out=90,in=120] (0,.2);
\end{tikzpicture}
\quad {\bigwedge}_q^N(\C_q^{m|n}\otimes \C_q^{k|l})
\quad
\begin{tikzpicture}[scale=.5,anchorbase]
  \draw [->] (0,-.2) to [out=-60,in=-90] (1,0) to [out=90,in=60] (0,.2);
\end{tikzpicture}
\quad
U_q(\gl_{k|l})
\]
the two actions commute and generate each other commutant.
\end{theorem}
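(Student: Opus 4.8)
The plan is to reduce the statement to the $\mathfrak{gl}_N$-type commutant result for tensor products already available via Schur--Weyl duality, and then bootstrap to arbitrary exterior powers using the idempotents $p_{\wedge}$. First I would set $W:=\C_q^{m|n}\otimes\C_q^{k|l}$ and note that ${\bigwedge}_q^N(W)$ sits inside $W^{\otimes N}$ as the image of the Hecke idempotent attached to the antisymmetrizer, exactly as in the definition preceding the theorem; this realizes ${\bigwedge}_q^N(W)$ as a subrepresentation of $W^{\otimes N}$ that is simultaneously $U_q(\gl_{m|n})$-stable and $U_q(\gl_{k|l})$-stable, since the two quantum groups act on the two tensor legs of $W$ and both commute with the $\calH_N$-action used to build the idempotent. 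So the two actions in the statement are just the restrictions of commuting actions on $W^{\otimes N}$, which gives the ``commute'' half for free.

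The substance is the double-centralizer part. Here I would first treat the case $n=l=0$, i.e.\ the honest (non-super) $\gl_{m}$--$\gl_k$ skew Howe duality, which is classical: one decomposes ${\bigwedge}_q^N(\C_q^m\otimes\C_q^k)$ using the $q$-analogue of the Cauchy identity ${\bigwedge}^N(\C^m\otimes\C^k)\cong\bigoplus_{\mu}S_\mu(\C^m)\otimes S_{\mu'}(\C^k)$ over partitions $\mu$ with at most $m$ rows and $k$ columns, where $S_\mu$ denotes the Schur functor / highest-weight module and $\mu'$ the conjugate. Multiplicity-freeness of this decomposition as a $U_q(\gl_m)\otimes U_q(\gl_k)$-bimodule is exactly the assertion that the two actions generate each other's commutant; the $q$-deformation of the decomposition follows from the corresponding statement for $\calH_N$-bimodules (the column/row antisymmetrizer idempotents decompose the regular representation) combined with quantum Schur--Weyl duality on each factor, for which we may invoke the references already cited (\cite{Jimbo,Etingof}). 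Equivalently, and perhaps cleaner, one observes that ${\bigwedge}_q^N(\C_q^m\otimes\C_q^k)\cong\bigoplus_{a_1+\cdots+a_k=N}{\bigwedge}_q^{a_1}(\C_q^m)\otimes\cdots\otimes{\bigwedge}_q^{a_k}(\C_q^m)$ as a $U_q(\gl_m)$-module with compatible $U_q(\gl_k)$-weight-space grading (this is the vertical isomorphism in the displayed picture), and then the image of $U_q(\gl_k)$ in $\End_{U_q(\gl_m)}$ is computed by seeing that the raising/lowering generators $E_a,F_a$ of $U_q(\gl_k)$ act, on the summand indexed by $(a_1,\dots,a_k)$, by the quantum ``divided power'' maps ${\bigwedge}_q^{a_i}(\C_q^m)\otimes{\bigwedge}_q^{a_{i+1}}(\C_q^m)\to {\bigwedge}_q^{a_i\mp1}(\C_q^m)\otimes{\bigwedge}_q^{a_{i+1}\pm1}(\C_q^m)$, which together with the $U_q(\gl_m)$-intertwiners between exterior powers are known to span the full centralizer (this is the content of \cite{CKM} at the categorical level).

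For the super case one bootstraps from the non-super one by the standard device: $U_q(\gl_{m|n})$ contains $U_q(\gl_m)\otimes U_q(\gl_n)$, the module ${\bigwedge}_q^N(\C_q^{m|n}\otimes\C_q^{k|l})$ decomposes over this even subalgebra into pieces ${\bigwedge}_q^a(\C_q^m\otimes\C_q^{k|l})\otimes \mathrm{Sym}_q^{N-a}(\C_q^n\otimes\C_q^{k|l})$ (the odd tensor factor turning the exterior power into a symmetric power, the familiar ``super'' swap), and one runs the Cauchy-type decomposition keeping track of which partitions are allowed — namely $(m|n)$-hook partitions on one side and $(k|l)$-hook partitions on the other. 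Multiplicity-freeness of the resulting $U_q(\gl_{m|n})\otimes U_q(\gl_{k|l})$-bimodule decomposition again encodes the double-centralizer property; one must check that the odd generators $E_m,F_m$ (and their $k|l$-counterparts), which mix the even and odd tensor legs, act by the expected maps and do not enlarge the centralizer, and that the relation $E_m^2=F_m^2=0$ is matched on the module by the collapse ${\bigwedge}_q^2$ of the odd directions. I would invoke \cite{WuZ,QS2} for the precise bimodule decomposition rather than reprove it. \textbf{The main obstacle} is this last point: controlling the super generators and the hook-shape combinatorics precisely enough to conclude multiplicity-freeness, since the clean Cauchy identity of the non-super case acquires genuine corrections in the $(m|n)\times(k|l)$ setting and the argument ``commuting actions + multiplicity-free decomposition $\Rightarrow$ double centralizer'' needs the decomposition to be exactly right. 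Everything else is either formal (the idempotent realization) or a citation.
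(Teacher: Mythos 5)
The paper does not actually prove this theorem: it is stated as a survey item with the proof deferred entirely to \cite{Howe,BerensteinZwicknagl,CKM,ChengWang,WuZ,QS2}, so there is no in-text argument to compare yours against line by line. Your outline is, however, consistent with how those references proceed, and it correctly isolates the two halves of the statement. The ``commute'' half is indeed free once one realizes ${\bigwedge}_q^N(W)$ as the image of the antisymmetrizer idempotent in $\calH_N$ acting on $W^{\otimes N}$, with the two quantum groups acting on the two tensor legs of $W=\C_q^{m|n}\otimes\C_q^{k|l}$ and both commuting with the $\calH_N$-action. The double-centralizer half via the (super) Cauchy identity and multiplicity-freeness of the $U_q(\gl_{m|n})\otimes U_q(\gl_{k|l})$-bimodule decomposition indexed by $(m|n)$- and $(k|l)$-hook partitions is exactly the route of Cheng--Wang and Wu--Zhang; your alternative description via the weight-space decomposition $\bigoplus_{a_1+\cdots+a_k=N}{\bigwedge}_q^{a_1}\otimes\cdots\otimes{\bigwedge}_q^{a_k}$ and the generators $E_a,F_a$ acting as the elementary intertwiners is the \cite{CKM} perspective. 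So the proposal is a faithful reconstruction of the standard proof rather than a new one, and you are honest that its technical core (the precise super bimodule decomposition) is being cited rather than reproved.

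Two small points of care if you wanted to make this self-contained. First, the inference ``commuting actions plus multiplicity-free decomposition implies double centralizer'' needs complete reducibility of the module over each algebra; for Lie superalgebras this is not automatic, and one must invoke semisimplicity of the category of polynomial (tensor) modules of $U_q(\gl_{m|n})$ at generic $q$ before the Cauchy decomposition does any work. Second, in your restriction to the even subalgebra $U_q(\gl_m)\otimes U_q(\gl_n)$, the summand $\C_q^n\otimes\C_q^{k|l}$ is not purely odd (it is $\C^{nl|nk}$ after the parity shift), so ${\bigwedge}_q^{N-a}$ of it is a mixed exterior-symmetric object rather than literally $\mathrm{Sym}_q^{N-a}(\C_q^n\otimes\C_q^{k|l})$; this does not derail the strategy but the bookkeeping is exactly the hook-partition combinatorics you flag as the main obstacle.
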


Again, we'll mostly be interested in the following fact, stated in the non-super case:
\[
\phi:U_q(\gl_k)\twoheadrightarrow \End_{U_q(\gl_m)}\left({\bigwedge}_q^N(\C_q^m\otimes \C_q^k)\right)
\]
The map is not faithful, but it stabilizes when $m$ grows, and one can define the quantum Schur algebra from it.

\begin{definition}
  The quantum Schur algebra $\calS_q(k,N)$ is defined as:
  \[
\calS_q(k,N):=U_q(\gl_k)/ker(\phi)\quad \text{for}\; m\geq N
  \]
  \end{definition}

The non-super statement of quantum skew Howe duality was used by Cautis-Kamnitzer-Morrison to give a presentation by generators and relations of the category $\catRep^{+}(U_q(\gl_{m}))$, the category monoidally generated by the exterior powers of the vector representations. In the super case, such presentations are in general yet to be found. The $\gl{(1|1)}$ case can be found in \cite{Grant,Sartori_cat}.

Let's now go back to the space ${\bigwedge}_q^N(\C_q^m\otimes \C_q^k)$ and present it in a version that looks closer to the tensor product of vector representations that we care about. To do so, one can show that the quantum exterior power behaves much like the usual exterior power with respect to direct sum, and decompose $\C_q^k\simeq \C_q\oplus \cdots \oplus \C_q$, so that one gets:
\begin{equation}\label{eq:wedge+}
  {\bigwedge}_q^N(\C_q^{m|n}\otimes \C_q^k)\simeq
  \bigoplus_{a_1+\cdots +a_k=N}{\bigwedge}_q^{a_1}(\C_q^{m|n})\otimes \cdots \otimes {\bigwedge}_q^{a_k}(\C_q^{m|n})
\end{equation}

Let us remark for later use that ${\bigwedge}_q^{a}(\C_q^{m|n})$ is $\{0\}$ if $a<0$, and if $n=0$ then ${\bigwedge}_q^{q}(\C_q^m)$ is also $\{0\}$ if $a>m$. Similarly, ${\bigwedge}_q^0(\C_q^{m|n})\simeq \C_q$ and for $n=0$ ${\bigwedge}_q^m(\C_q^m)$ is the determinant representation, also of dimension 1.

We have already mentioned the idea of restricting from $\gl_m$ to $\slnn{m}$, and it will again play a role soon, so let us give a quick explanation. As appears from Definition~\ref{def:slmn}, $U_q(\slnn{m})$ is a subalgebra of $U_q(\gl_m)$. So any representation $V$ of $U_q(\gl_m)$ is also a representation of $U_q(\slnn{m})$ by what's called restriction, which is simply the composition: $U_q(\slnn{m})\hookrightarrow U_q(\gl_m) \rightarrow \End(V)$. Of course, one looses some information when restricting the action to a subalgebra, and for example one can explicitly check that ${\bigwedge}_q^0(\C_q^m)$ and ${\bigwedge}_q^m(\C_q^m)$ both become isomorphic, trivial representations once restricted to $U_q(\slnn{m})$ (the only difference lied in the action of the $L_i$'s, but these differences balance when one forms $K_i=L_{i}L_{i+1}^{-1}$).

\subsection{Schur--Weyl and skew Howe dualities}

Aaron Lauda mentioned to me a few years ago that one could actually realize Schur--Weyl duality as an instance of skew Howe duality, and vice-versa. I thought these notes might be a good opportunity to try to get a clearer picture of this. I am not 100\% sure that the result is as natural as I hoped, but let us give it a try. Note that a related discussion appeared already years ago in~\cite[Section 2]{MSV2}. The reader not interested in torturing her or his mind can safely jump right away to Section \ref{sec:backtobusiness}.

A fairly easy direction is to realize Schur--Weyl duality inside skew Howe duality. To do so, let us fix $N=l$, and write, following Equation~\ref{eq:wedge+}:

\[
{\bigwedge}^l_q(\C_q^m\otimes \C_q^l)\simeq \bigoplus_{a_1+\cdots + a_l=l} {\bigwedge}_q^{a_1}(\C_q^m)\otimes \cdots \otimes {\bigwedge}_q^{a_l}(\C_q^m)
\]
The quantum Schur algebra $\calS_q(l,l)$ acts on this space by $U_q(\gl_m)$ intertwiners, and the space on the rhs contains $\C_q^m\otimes \cdots \otimes \C_q^m$ as a subspace (choosing $a_1=\dots = a_l=1$). $U_q(\gl_m)$ intertwiners for this space are realized by $\calH_l$, and thus one can embed $\calH_l\subset \calS_q(l,l)$. That way, one can somehow see Schur--Weyl duality as living inside skew Howe duality.

To go the other way around will require a little bit more of torturing things. Recall $\catRep^{\uparrow}$ is the category of $U_q(\gl_m)$ intertwiners monoidally generated by $V$, and that one can realize $\calH_N:=\End_{U_q(\gl_m)}(V^{\otimes N})$. Recall also the maps $\iota$ and $\pi$ allowing to realize ${\bigwedge}_q^a(\C_q^m)$ as a subrepresentation of $(\C_q^m)^{\otimes a}$. One can build over them to realize:
\[
{\bigwedge}_q^{a_1}(\C_q^m)\otimes \cdots \otimes {\bigwedge}_q^{a_l}(\C_q^m)\subset (\C_q^m)^{\otimes N}
\]

Now, the situation is as follows: $\calH_N$ acts on $(\C_q^m)^{\otimes N}$, which is the only space we have at hand, but which contains tensor products of exterior powers as subspaces. So what we need is to be able to cut that space in pieces, which is handily allowed by the process of taking a Karoubi envelope.

\begin{definition}
  The Karoubi envelope $\Kar(\mathcal{C})$ of a category $\mathcal{C}$ is the category whose objects are pairs $(M,p)$ with $M$ an object of $\mathcal{C}$ and $p\in \End_{\mathcal{C}}(M)$ an idempotent. For morphisms,
\[
\Hom_{\Kar(\mathcal{C})}((M_1,p_1),(M_2,p_2))=p_2\circ \Hom_{\mathcal{C}}(M_1,M_2)\circ p_1
\]
\end{definition}

In order to pass from $\calH_N$ to $\calS_q(l,N)$, we'll first transform $\calH_N$ into a category $\dot{\calH}_N$ with a single object (that we will think of as being $V^{\otimes N}$) and with endomorphism space over that object being $\calH_N$. So far, this is purely formal.

The Karoubi envelope $\Kar(\dot{\calH}_N)$ contains more objects, and in particular it contains objects corresponding to the projectors $V^{\otimes N}\twoheadrightarrow {\bigwedge}_q^{a_1}\otimes\cdots \otimes {\bigwedge}_q^{a_l}$. The point here is that these projectors live already in $\calH_N$. Let's denote such a projector by $\1_{[a_1,\dots, a_l]}$.

Let us now define Lusztig's idempotented version of the Schur algebra, $\dot{\calS}_q(l,N)$, as the full subcategory of $\Kar(\mathcal{C})$ with objects corresponding to the previous projectors. In other words:
\begin{itemize}
\item objects in $\dot{\calS}_q(l,N)$ are $\1_{[a_1,\dots,a_l]}$ with $\sum_i a_i=N$ and $a_i\geq 0$;
\item morphisms are given by:
  \[
  \Hom_{\dot{\calS}_q(l,N)}(\1_{[a_1,\dots,a_l]},\1_{[a'_1,\dots,a'_l]})=\1_{[a'_1,\dots,a'_l]}\calH_N\1_{[a_1,\dots,a_l]}
  \]
\end{itemize}
One can check (see \cite[Theorem 2.4]{DotyGiaquinto}):
\[
\mathcal{S}_q(l,N)=\bigoplus_{\substack{a_1+\dots +a_l=N\\a'_1+\dots + a'_l=N}} \Hom_{\dot{\calS}_q(l,N)}(\1_{[a_1,\dots,a_l]},\1_{[a'_1,\dots,a'_l]})
\]

In the end, one can get (Lusztig's idempotented version of) the Schur algebra as a full subcategory of the Karoubi envelope of the Hecke algebra, letting thus skew Howe duality live inside Schur--Weyl.

Before going back to the question of computing knot invariants and relating them to quantum algebras, let us just comment on the fact that the objects that appear in the category version of the Schur algebras are weight spaces for the dual $U_q(\gl_m)$ action. Using the definition of the quantum exterior power and the explicit action induced from the vector representation, one can very explicitly compute that:
\begin{gather} \label{eq:actionWSpaces}
  {K_i}_{|{\bigwedge}_q^{a_1}(V)\otimes \cdots \otimes {\bigwedge}_q^{a_l}(V)}=q^{a_i-a_{i+1}}id \\ E_i\left({\bigwedge}_q^{a_1}(V)\otimes \cdots \otimes {\bigwedge}_q^{a_l}(V)\right)\subset {\bigwedge}_q^{a_1}(V)\otimes \cdots\otimes {\bigwedge}_q^{a_{i}+1}(V)\otimes {\bigwedge}_q^{a_{i+1}-1}(V)\otimes \cdots \otimes {\bigwedge}_q^{a_l}(V) \nonumber
  \\ F_i\left({\bigwedge}_q^{a_1}(V)\otimes \cdots \otimes {\bigwedge}_q^{a_l}(V)\right)\subset {\bigwedge}_q^{a_1}(V)\otimes \cdots\otimes {\bigwedge}_q^{a_{i}-1}(V)\otimes {\bigwedge}_q^{a_{i+1}+1}(V)\otimes \cdots \otimes {\bigwedge}_q^{a_l}(V) \nonumber
  \end{gather}

These relations transport into the Schur algebras:
\begin{gather} \label{eq:SchurAlgRel} 
  {K_i}\1_{[a_1,\dots,a_l]}=q^{a_i-a_{i+1}}\1_{[a_1,\dots,a_l]} \\
  E_i\1_{[\dots,a_{i-1},a_i,a_{i+1},a_{i+2},\dots]} = \1_{[\dots,a_{i-1},a_i+1,a_{i+1}-1,\dots]}E_i   \\
F_i\1_{[\dots,a_{i-1},a_i,a_{i+1},a_{i+2},\dots]} = \1_{[\dots,a_{i-1},a_i-1,a_{i+1}+1,\dots]}F_i  
  \end{gather}

\subsection{Back to business} \label{sec:backtobusiness}

Recall that skew Howe duality was supposed to help us go from braids to more general tangles, including links. Let us restart from the general situation of the category $\catRep^{+}(U_q(\gl_{m|n}))$, generated by tensor products of exterior powers of the vector representation. Skew-Howe duality provides us a functor, the action of which can be read on objects from Equation~\eqref{eq:actionWSpaces}:
\begin{gather*}
  \dot{\calS}_q(l,N)\rightarrow \catRep^+(U_q(\gl_{m|n})) \\
  \1_{[a_1,\dots,a_l]}\rightarrow {\bigwedge}_q^{a_1}(V)\otimes \cdots \otimes {\bigwedge}_q^{a_l}(V)
\end{gather*}

Recall that $\calS_q(l,N)$ was obtained as a quotient of $U_q(\gl_l)$, and thus inherits a presentation by generators and relations.

Let us now assume that $n=0$, and consider the restriction functor:
\[
\catRep^{+}(U_q(\gl_{m}))\rightarrow \catRep^{+}(U_q(\slnn{m}))
\]

\begin{lemma}
  The following special cases hold in $\catRep^{+}(U_q(\slnn{m}))$.
  \[
    {\bigwedge}_q^m(V)\simeq \C_q \; \text{and}\; {\bigwedge}_q^{m-1}(V)\simeq V^{\ast}
    \]
\end{lemma}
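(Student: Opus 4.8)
The plan is to establish the two isomorphisms in turn; the first is essentially the computation already indicated in the paragraph preceding the statement.

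For ${\bigwedge}_q^m(V)$, recall that it is one-dimensional, spanned by the image $\omega$ of $x_1\otimes x_2\otimes\cdots\otimes x_m$. I would evaluate the generators of $U_q(\slnn{m})$ on $\omega$. Since $\Delta(L_i)=L_i\otimes L_i$ and $L_i$ scales only the basis vector $x_i$ by $q$, both $L_i$ and $L_{i+1}$ multiply $\omega$ by $q$, so $K_i=L_iL_{i+1}^{-1}$ fixes $\omega$. Expanding the iterated coproduct of $E_i$ (resp.\ of $F_i$) shows that it sends $\omega$, up to a scalar, to the image of a tensor in which $x_i$ (resp.\ $x_{i+1}$) occurs twice in consecutive slots; as $x_i\otimes x_i\in I$, this image vanishes in ${\bigwedge}_q^m(V)$. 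Hence $E_i\omega=F_i\omega=0$, and the restriction of ${\bigwedge}_q^m(V)$ to $U_q(\slnn{m})$ is the trivial module $\C_q$.

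For ${\bigwedge}_q^{m-1}(V)\simeq V^\ast$, the key device is ``exterior multiplication by a vector''. The quotient $V^{\otimes m}\twoheadrightarrow{\bigwedge}_q^m(V)$ descends to a $U_q(\gl_m)$-equivariant pairing
\[
\mu\colon{\bigwedge}_q^{m-1}(V)\otimes V\longrightarrow{\bigwedge}_q^m(V),
\]
well-defined because the relations cutting out ${\bigwedge}_q^{m-1}(V)$ as a quotient of $V^{\otimes(m-1)}$, tensored on the right by $V$, are among those cutting out ${\bigwedge}_q^m(V)$ as a quotient of $V^{\otimes m}$. I would check that $\mu$ is \emph{perfect} by evaluating it on the natural basis $e_k$ of ${\bigwedge}_q^{m-1}(V)$ — the image of $x_1\otimes\cdots\otimes x_{k-1}\otimes x_{k+1}\otimes\cdots\otimes x_m$ — against the vectors $x_j$: for $j\neq k$ a basis vector is repeated and the value is zero, while $\mu(e_k\otimes x_k)$ equals $\omega$ up to a nonzero power of $-q$, obtained by commuting $x_k$ back into position using the defining relations of ${\bigwedge}_q$, which introduce no lower-order corrections because all indices involved are distinct. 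Thus the matrix of $\mu$ in these bases is diagonal and invertible. A perfect $U_q(\gl_m)$-invariant pairing ${\bigwedge}_q^{m-1}(V)\otimes V\to{\bigwedge}_q^m(V)$ gives, by the adjunction between $-\otimes V$ and internal hom, an isomorphism ${\bigwedge}_q^{m-1}(V)\simeq V^\ast\otimes{\bigwedge}_q^m(V)$ of $U_q(\gl_m)$-modules; restricting to $U_q(\slnn{m})$ and using the first part to trivialize the one-dimensional factor ${\bigwedge}_q^m(V)$ yields ${\bigwedge}_q^{m-1}(V)\simeq V^\ast$. One can also bypass the pairing and check directly that, after a suitable rescaling, $e_k\mapsto x_k^\ast$ intertwines the $U_q(\slnn{m})$-action: $K_i$ acts on $e_k$ by $q^{-1}$, $q$, or $1$ according as $k=i$, $k=i+1$, or otherwise, $E_ie_i$ is a nonzero multiple of $e_{i+1}$ and $E_ie_k=0$ for $k\neq i$, symmetrically for $F_i$ — exactly matching the action on $V^\ast$ recorded before the statement.

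I do not expect a deep obstacle: the real content is the bookkeeping of $q$-powers, above all the precise scalar $\mu(e_k\otimes x_k)$ (equivalently, the normalizing scalars in the direct approach). The one conceptual input is that ${\bigwedge}_q^m(V)$ restricts to the \emph{trivial} $U_q(\slnn{m})$-module, not merely a one-dimensional module — this is precisely the first paragraph, and it is what makes the determinant factor in $V^\ast\otimes{\bigwedge}_q^m(V)$ collapse, leaving $V^\ast$.
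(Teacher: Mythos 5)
Your proposal is correct and follows essentially the same route as the paper's sketch: an explicit computation showing the generators other than the $K_i$'s kill the top exterior power (you do this by expanding the coproduct and landing in the ideal $I$, the paper by weight reasons — the same fact), and then the natural multiplication map ${\bigwedge}_q^{m-1}(V)\otimes V\to{\bigwedge}_q^m(V)\simeq\C_q$ realizing the duality. You simply supply more of the details (well-definedness of the pairing, its nondegeneracy on the monomial bases, and the adjunction step) that the paper's sketched proof leaves implicit.
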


\begin{proof}[Sketched proof]
  The first statement is an explicit computation. For weight reasons, only the $K_i$'s may act by non-zero operators, and Equation~\eqref{eq:actionWSpaces} shows that the value is always $1$. This identifies the top exterior power with the trivial representation, and then the natural projection map:
  \[
  V\otimes {\bigwedge}_q^{m-1}(V) \rightarrow {\bigwedge}_q^m{V}\simeq \C_q
  \]
 realizes the duality.
\end{proof}

More generally, one can check that ${\bigwedge}_q^k(V)\simeq ({\bigwedge}_q^{m-k}(V))^\ast$.

Here comes the upshot: $\catRep^+(U_q(\slnn{m}))$ contains all the objects needed to construct the Reshetikhin-Turaev invariant for tangles, and encodes in an algebraically rigid fashion the $\Hom$-spaces between such objects.

Very concretely, we are looking for a map that to elementary tangles assigns some morphism in the Schur category. This is exactly what~\cite{CKM} does.

\begin{theorem}
  The following rules are part of the definition of the functor from the category of tangles to the Schur category.
\[
\begin{tikzpicture}[anchorbase,scale=.5]
  \draw [<-] (1,1) -- (1,.5) to [out=-90,in=-90] (0,.5) -- (0,1);
\end{tikzpicture}
\quad \leftrightarrow \quad
\begin{tikzpicture}[anchorbase,scale=.5]
  \draw [dotted] (0,0) -- (0,.4);
  \draw [dotted] (1,0) -- (1,.6);
  \draw [semithick] (0,.4) -- (0,1);
  \draw [semithick] (1,.6) -- (1,1);
  \draw [semithick] (0,.4) -- (1,.6);
\end{tikzpicture}
\quad \leftrightarrow \quad
\1_{[m-1,1]}F\1_{[m,0]}
\]
\[
\begin{tikzpicture}[anchorbase,scale=.5]
  \draw [->] (1,1) -- (1,.5) to [out=-90,in=-90] (0,.5) -- (0,1);
\end{tikzpicture}
\quad \leftrightarrow \quad
\begin{tikzpicture}[anchorbase,scale=.5]
  \draw [dotted] (0,0) -- (0,.6);
  \draw [dotted] (1,0) -- (1,.4);
  \draw [semithick] (0,.6) -- (0,1);
  \draw [semithick] (1,.4) -- (1,1);
  \draw [semithick] (0,.6) -- (1,.4);
\end{tikzpicture}
\quad \leftrightarrow \quad
\1_{[1,m-1]}E\1_{[0,m]}
\]
\[
\begin{tikzpicture}[anchorbase,scale=.5]
  \draw [<-] (1,0) -- (1,.5) to [out=90,in=90] (0,.5) -- (0,0);
\end{tikzpicture}
\quad \leftrightarrow \quad
\begin{tikzpicture}[anchorbase,scale=.5]
  \draw [semithick] (0,0) -- (0,.4);
  \draw [semithick] (1,0) -- (1,.6);
  \draw [dotted] (0,.4) -- (0,1);
  \draw [dotted] (1,.6) -- (1,1);
  \draw [semithick] (0,.4) -- (1,.6);
\end{tikzpicture}
\quad \leftrightarrow \quad
\1_{[0,m]}F\1_{[1,m-1]}
\]
\[
\begin{tikzpicture}[anchorbase,scale=.5]
  \draw [->] (1,0) -- (1,.5) to [out=90,in=90] (0,.5) -- (0,0);
\end{tikzpicture}
\quad \leftrightarrow \quad
\begin{tikzpicture}[anchorbase,scale=.5]
  \draw [semithick] (0,0) -- (0,.6);
  \draw [semithick] (1,0) -- (1,.4);
  \draw [dotted] (0,.6) -- (0,1);
  \draw [dotted] (1,.4) -- (1,1);
  \draw [semithick] (0,.6) -- (1,.4);
\end{tikzpicture}
\quad \leftrightarrow \quad
\1_{[m,0]}E\1_{[m-1,1]}
\]
\end{theorem}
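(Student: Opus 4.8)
The plan is to verify that the four assignments on elementary tangles (cup, cup-dual, cap, cap-dual) are compatible with the relations satisfied by these tangles, i.e.\ with the Reidemeister moves and their oriented refinements, or — more economically — with the defining relations of $\uBr(\beta = m-n)$ as listed in Equation~\eqref{eq:5}, since by Remark~\ref{rem:tangleBrRT} the tangle functor factors through $\uBr$. Because $\catRep^{+}(U_q(\slnn{m}))$ has the presentation by generators and relations given by Cautis--Kamnitzer--Morrison~\cite{CKM}, it is equivalent to check that the images of the crossing and of the cups/caps generate an idempotented $\dot{\calS}_q(l,N)$-morphism satisfying the web relations; concretely, the crossing is already in the image of $\calH_N \subset \dot{\calS}_q(l,l)$ via Schur--Weyl, and the cups/caps are the displayed $E$- and $F$-ladder morphisms between the weights $[m,0]$, $[m-1,1]$, $[0,m]$, $[1,m-1]$ of $\dot{\calS}_q(2,m)$.

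First I would pin down the source and target objects. Under skew Howe, $\1_{[m,0]} \mapsto {\bigwedge}_q^m(V) \otimes {\bigwedge}_q^0(V) \simeq \C_q \otimes \C_q \simeq \C_q$ after restricting to $U_q(\slnn{m})$ (by the Lemma just proved), while $\1_{[m-1,1]} \mapsto {\bigwedge}_q^{m-1}(V) \otimes {\bigwedge}_q^1(V) \simeq V^{\ast} \otimes V$; likewise $\1_{[0,m]} \mapsto \C_q$ and $\1_{[1,m-1]} \mapsto V \otimes V^{\ast}$. So $\1_{[m-1,1]} F \1_{[m,0]}$ is indeed a map $\C_q \to V^{\ast} \otimes V$, matching the source/target of a $\downarrow$-cup, and the three other formulas match up the same way after reading off the ladder-diagram conventions (solid strands $=$ exterior powers present, dotted $=$ absent). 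The relations~\eqref{eq:SchurAlgRel} for $E_i, F_i$ against the idempotents $\1_{[\dots]}$ guarantee these composites land in the correct $\Hom$-spaces.

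Next I would check the actual identities. The honest content is that the explicit Reshetikhin--Turaev cups and caps of Section~\ref{sec:RT} — the maps $1 \mapsto \sum_k x_k \otimes x_k^{\ast}$, $1 \mapsto q^{m-n}(\sum q^{1-2k} x_k^{\ast}\otimes x_k - \dots)$, and the two evaluations — coincide, \emph{up to the canonical identifications ${\bigwedge}_q^m V \simeq \C_q$ and ${\bigwedge}_q^{m-1} V \simeq V^{\ast}$}, with the single divided-power operators $E$ and $F$ acting on the relevant summand of ${\bigwedge}_q^2(\C_q^m \otimes \C_q^2)$. Here $E = E^{(1)}$ sends ${\bigwedge}_q^{m-1}V \otimes {\bigwedge}_q^1 V$ into ${\bigwedge}_q^m V \otimes {\bigwedge}_q^0 V$ and, read the other way (as $\1_{[1,m-1]}E\1_{[0,m]}$), builds the coevaluation; the point is that the isomorphism ${\bigwedge}_q^{m-1}V \cong V^{\ast}$ was constructed in the Lemma precisely via the projection $V \otimes {\bigwedge}_q^{m-1} V \to {\bigwedge}_q^m V \cong \C_q$, so the divided power $E$ \emph{is} the duality pairing by definition, and $F$ its mate. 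I would then match the $q$-power normalizations: the ``ribbon''/pivotal scalars $q^{m-n}$, $q^{1-2k}$, $q^{-m+n-1+2k}$ that distinguish the $\uparrow$-cup from the $\downarrow$-cup are exactly the ones produced by the explicit action of the $K_i$'s on ${\bigwedge}_q^{a_1}(V) \otimes {\bigwedge}_q^{a_2}(V)$ recorded in Equation~\eqref{eq:actionWSpaces}, i.e.\ by the weight-space structure, so these scalars are forced and not free parameters.

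**The main obstacle** I expect is bookkeeping of the two inequivalent normalizations of duality — left vs.\ right duals / the pivotal structure — and in particular making sure the ${\bigwedge}_q^{m-1}V \simeq V^{\ast}$ identification is used consistently in all four formulas (the cup for $\downarrow$ pairs $V^{\ast}$ on the left, the cap for $\downarrow$ on the right, and the super signs $(-1)^{|i||j|}$ must land in the right places). Rather than re-deriving all the $q$-exponents by hand, I would invoke that both sides are $U_q(\slnn{m})$-intertwiners between one-dimensional-source (resp.\ target) spaces, hence determined up to a scalar by Schur's lemma, and then fix the scalar by evaluating on a single basis vector — e.g.\ tracking the coefficient of $x_1^{\ast}\otimes x_1$ in the image of $1$ — which reduces the whole verification to a handful of scalar comparisons with the divided-power formula for $E$, $F$ acting on the quantum exterior powers as in~\cite[CKM]{CKM} and~\cite[Jimbo]{Jimbo}.
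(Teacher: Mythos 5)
The paper itself offers no proof of this statement: the rules are simply quoted from \cite{CKM} as (part of) the definition of the functor, so there is no argument of the author's to compare yours against. Judged on its own, your strategy is the right one and is essentially the standard justification: identify the skew Howe images of the weight idempotents, $\1_{[m,0]},\1_{[0,m]}\mapsto \C(q)$ and $\1_{[m-1,1]}\mapsto {\bigwedge}_q^{m-1}(V)\otimes V\simeq V^{\ast}\otimes V$, $\1_{[1,m-1]}\mapsto V\otimes V^{\ast}$, observe that each relevant $\Hom$-space of intertwiners is one-dimensional, and pin down the single remaining scalar by evaluating on one basis vector, so that the ladder operators $E\1_{[0,m]}$, $F\1_{[m,0]}$, etc.\ coincide with the Reshetikhin--Turaev (co)evaluations of Section~\ref{sec:RT}. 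Your identification of sources and targets is correct, and the Schur's-lemma reduction is exactly what makes the verification tractable.

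Two points need tightening. First, the claim that the coefficients $q^{m-n}$, $q^{1-2k}$, $q^{-m+n-1+2k}$ are ``exactly the ones produced by'' Equation~\eqref{eq:actionWSpaces} is not right as stated: that equation only records the single scalar $q^{a_i-a_{i+1}}$ by which $K_i$ acts on an entire weight space of the dual action, whereas the exponents $1-2k$ vary with the basis vector $x_k$ of $V$ and come from the $\gl_m$-side (the pivotal element, or equivalently from computing $F$ explicitly on the one-dimensional space ${\bigwedge}_q^{m}(V)$ realized inside $V^{\otimes m}$ via the antisymmetrizer). Your Schur's-lemma fallback absorbs this, but the cited justification should be replaced. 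Second, to conclude that these rules really are ``part of the definition of the functor'' on tangles, matching their images in $\catRep^{+}(U_q(\slnn{m}))$ with the RT morphisms is only sufficient if you also invoke that $\dot{\calS}_q(l,N)\to\catRep^{+}(U_q(\gl_m))$ is faithful for $m\geq N$ --- which holds here by the very definition of $\calS_q$ as the quotient by the kernel --- so that the tangle relations, known to hold for the RT functor, pull back to relations in the Schur category. Make that step explicit and the argument is complete in outline, with the actual scalar checks still to be carried out.
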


In the above theorem, the middle pictures serve as an intermediate step between the topological data and its algebraic counterpart. They actually generalize to so-called ladder instances of the more flexible idea of webs, at the heart of a fruitful history at the interface between quantum topology and representation theory. But let us not say more about that, and refer to~\cite{CKM} for more details and references.

We are still lacking a piece of data to conclude: we need to say what gets assigned to crossings. The formulas for the kind of crossings that appears in the story we have been studying so far (that is, the case where the strands of the knots carry no colors and thus correspond to vector representations) are easy to write down, but it is worth noticing that it is part of a larger story that relates to the so-called ``quantum Weyl group action'' defined by Lusztig~\cite[Chapters 5 and 37]{Lus4}.

\begin{theorem}
  The following rules give the image of a crossing in the Schur category.
  \[
  \begin{tikzpicture}[smallnodes,anchorbase,xscale=0.7,yscale=0.5]
   \draw[uno] (1,0)  -- ++(0,0.3) \midarrow .. controls ++(0,0.7) and ++(0,-0.7) .. ++(-1,1.4) -- ++(0,0.3)  \midarrow ;
   \draw[uno,cross line] (0,0)  -- ++(0,0.3) \midarrow .. controls ++(0,0.7) and ++(0,-0.7) .. ++(1,1.4) -- ++(0,0.3)  \midarrow ;
     \end{tikzpicture}
\quad  \leftrightarrow \quad
q^{-1}\1_{[1,1]} -FE\1_{[1,1]}\in \End_{\dot{\calS}_q}(\1_{[1,1]})
\]
and
  \[
  \begin{tikzpicture}[smallnodes,anchorbase,xscale=0.7,yscale=0.5]
   \draw[uno] (0,0)  -- ++(0,0.3) \midarrow .. controls ++(0,0.7) and ++(0,-0.7) .. ++(1,1.4) -- ++(0,0.3)  \midarrow ;
   \draw[uno, cross line] (1,0)  -- ++(0,0.3) \midarrow .. controls ++(0,0.7) and ++(0,-0.7) .. ++(-1,1.4) -- ++(0,0.3)  \midarrow ;
     \end{tikzpicture}
\quad  \leftrightarrow \quad
q\1_{[1,1]} -FE\1_{[1,1]}\in \End_{\dot{\calS}_q}(\1_{[1,1]})
\]

\end{theorem}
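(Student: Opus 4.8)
The plan is to verify the two formulas by direct computation, using the explicit Schur--Weyl realization of the Hecke algebra generator together with the dictionary relating the Hecke algebra to $\dot{\calS}_q(2,2)$. First I would recall that under Schur--Weyl duality the crossing $T_1$ acts on $V \otimes V = (\C_q^m)^{\otimes 2}$ by the $R$-matrix, whose formula was already listed in Section~\ref{sec:RT}: on $x_i \otimes x_j$ it gives $q^{-1}x_i\otimes x_i$ when $i=j$, $x_j\otimes x_i$ when $i<j$, and $x_j\otimes x_i + (q^{-1}-q)x_i\otimes x_j$ when $i>j$ (taking $n=0$, so all signs are trivial). Equivalently, this is the element $T_1 \in \calH_2$ satisfying $(T_1 - q^{-1})(T_1 + q) = 0$. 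The embedding $\calH_2 \hookrightarrow \dot\calS_q(2,2)$ identifies $\End_{\calH_2}(V^{\otimes 2})$ with $\End_{\dot\calS_q}(\1_{[1,1]})$, so it suffices to express $T_1$ (and its inverse $T_1^{-1}$) in terms of the Schur generators $E, F$ restricted to the weight space $\1_{[1,1]}$.

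The key step is to identify the operator $FE\1_{[1,1]}$ concretely. By the relations in \eqref{eq:SchurAlgRel}, $E\1_{[1,1]} = \1_{[2,0]}E$ maps $\1_{[1,1]}$ to $\1_{[2,0]}$, i.e.\ $V\otimes V \to {\bigwedge}_q^2(V)$ (the one-dimensional-in-the-relevant-sense quotient, realized inside $V^{\otimes 2}$ after composing with $\iota$), and then $F\1_{[2,0]} = \1_{[1,1]}F$ maps back. Thus $FE\1_{[1,1]}$ is, up to a scalar, the idempotent projecting $V^{\otimes 2}$ onto the symmetric (quantum) part; more precisely one computes $E$ and $F$ on basis vectors using the induced action from the vector representation and the quantum exterior power relations, obtaining that $FE\1_{[1,1]}$ acts on $x_i\otimes x_j$ by something proportional to the ``antisymmetrizer'' $\frac{1}{[2]}(\,\cdot\,)$. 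Comparing with $p_{\wedge} = \iota\circ\pi$ from the Jimbo formula $\frac{1}{[2]}(1 - q\,T_1^{-1})$ (or its analogue for the symmetric projector), one reads off that $q^{-1}\1_{[1,1]} - FE\1_{[1,1]}$ equals precisely the $R$-matrix $T_1$, and likewise $q\1_{[1,1]} - FE\1_{[1,1]}$ equals $T_1^{-1}$ — consistency being guaranteed since $T_1 - T_1^{-1} = (q^{-1}-q)\1_{[1,1]}$, which matches the difference $q^{-1} - q$ of the two scalar terms, exactly the Hecke quadratic relation rewritten.

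Concretely I would carry out the steps in this order: (1) write down $E\1_{[0,2]}$, $F\1_{[1,1]}$ etc.\ on basis vectors of $V^{\otimes 2}$ for small $m$ (say $m=2$, which already determines everything by naturality/stabilization of the Schur quotient for $m\geq N=2$); (2) compute the composite $FE\1_{[1,1]}$ explicitly as a matrix; (3) check that $q^{-1}\id - FE\1_{[1,1]}$ reproduces the tabulated $R$-matrix action, and symmetrically for the inverse crossing; (4) observe that both assignments are forced to be functorial because cups, caps and crossings together generate the tangle category and the cup/cap assignments were already fixed by the previous theorem, so it only remains to check the Reidemeister relations, which reduce to the braid relation $T_1T_2T_1 = T_2T_1T_2$ and the quadratic relation in $\calH_N$ — both of which hold in $\dot\calS_q$ by construction since $\calH_N$ embeds there.

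The main obstacle I expect is bookkeeping of the idempotent normalizations and the precise relation between the quantum exterior power projector used here and the version in the cited references: one must be careful that $FE\1_{[1,1]}$ is exactly (not merely up to an invertible scalar) the combination appearing, and that the chosen coproduct/antipode conventions in Section~\ref{sec:RT} match the sign and power-of-$q$ conventions implicit in the Schur algebra presentation. Once the normalization is pinned down, the verification is a finite linear-algebra check; the conceptual content is simply that the braiding on $V\otimes V$ decomposes along the eigenspaces of $FE\1_{[1,1]}$ — the quantum-symmetric and quantum-exterior summands — with eigenvalues $q^{-1}$ and $-q$, which is precisely the Hecke quadratic relation seen through skew Howe duality.
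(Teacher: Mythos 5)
The paper does not actually prove this theorem: the formulas are quoted from Cautis--Kamnitzer--Morrison and tied to Lusztig's quantum Weyl group action, so your direct verification is supplementary content rather than a competing proof, and your strategy is the standard (and correct) one. The cleanest way to pin down the key identity is the following: on $\1_{[2,0]}$ one has $K=q^{2}$ and $FE\1_{[2,0]}$ factors through $\1_{[3,-1]}=0$, so the commutator relation $EF-FE=\frac{K-K^{-1}}{q-q^{-1}}$ gives $EF\1_{[2,0]}=[2]\1_{[2,0]}$; hence $(FE\1_{[1,1]})^{2}=[2]\,FE\1_{[1,1]}$, i.e.\ $\tfrac{1}{[2]}FE\1_{[1,1]}$ is an equivariant idempotent whose kernel is the subspace $I$ defining the quantum exterior square and whose image is the copy of ${\bigwedge}_q^{2}(V)$ inside $V^{\otimes 2}$. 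Since $V^{\otimes 2}$ decomposes multiplicity-freely for generic $q$, this forces $FE\1_{[1,1]}=[2]\,p_{\wedge}=q^{-1}\1_{[1,1]}-T_1$, because Jimbo's antisymmetrizer for $l=2$ specializes to $\tfrac{1}{[2]}(q^{-1}-T_1)$; the second formula then follows from the quadratic relation exactly as you say.

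Two slips should be corrected before your argument can stand as written. First, $FE\1_{[1,1]}$ projects (up to the scalar $[2]$) onto the quantum \emph{exterior} summand, not onto the ``symmetric (quantum) part'': its kernel is the symmetric part $I$, on which $T_1$ has eigenvalue $q^{-1}$, while on its image $T_1=-q$. You state this correctly in your closing sentence but contradict it in the middle of the proof. Second, the antisymmetrizer you quote, $\tfrac{1}{[2]}(1-qT_1^{-1})$, is not idempotent (it takes the value $2/[2]$ on the exterior summand); the formula in the paper specializes to $\tfrac{1}{[2]}(q^{-1}-T_1)$. Neither slip breaks the logic, since you explicitly defer the normalization to a final linear-algebra check, but both need fixing for the verification to close.
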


All statements above are to be considered as local. The general recipe to turn a tangle or link diagram into an element of the Schur algebra would be to isotope it in a more rigid-looking way, so that it can be sliced into element pieces which consist of one of the above element tangles with extra vertical strands. That way, one can turn the diagram into a composition of elements in the Schur algebras extracted from the previous theorem, and use the algebraic power of those to run the computations.

\begin{example} \label{ex:unknot}
  A very easy example, but one I actually quite like, is to consider the unknot:
  \[
  \begin{tikzpicture}[anchorbase]
    \draw [semithick,->] (0,0) arc (0:360:.5);
    \end{tikzpicture}
  \]
  Presenting it as a composition of a cap with a cup, it translates in $\End_{\dot{\calS}_q(1,m)}(\1_{[0,m]})$ into $FE\1_{[0,m]}$. Now, recall that:
\[
FE\1_{[0,m]}=EF\1_{[0,m]}-\frac{K-K^{-1}}{q-q^{-1}}\1_{[0,m]}
\]
and that $EF\1_{[0,m]}$, starting from an object corresponding to ${\bigwedge}_q^0(\C_q^m)\otimes {\bigwedge}_q^m(\C_q^m)$, factors through ${\bigwedge}_q^{-1}(\C_q^m)\otimes {\bigwedge}_q^{m+1}(\C_q^m)$, which is zero. Furthermore, from Equation~\eqref{eq:SchurAlgRel}, one reads that $K\1_{[0,m]}=q^{-m}$. Thus one gets:
 \[
  \begin{tikzpicture}[anchorbase]
    \draw [semithick,->] (0,0) arc (0:360:.5);
    \end{tikzpicture}
\rightarrow -\frac{q^{-m}-q^m}{q-q^{-1}}=[m]
  \]
 as desired!
\end{example}

\begin{example}
  We consider the Hopf link, and aim at computing its $\slnn{m}$ Reshetikhin-Turaev invariant. It can be rigidified as follows:
  \[
\begin{tikzpicture}[anchorbase,scale=.8]
  \begin{knot}[
            clip width=7,
            consider self intersections,
            end tolerance=1pt,
            flip crossing=1,
    ]
    \strand [thick] (0,0) circle (1.2);
    \strand [thick] (2,0) circle (1.2);
    \end{knot}
\end{tikzpicture}
\rightarrow
\begin{tikzpicture}[anchorbase,scale=.8]
  \begin{knot}[
            clip width=7,
            consider self intersections,
            end tolerance=1pt,
            flip crossing=1,
    ]
    \strand [thick]  (0,2.5) to [out=90,in=90] (1,2.5) to (1,2) to [out=-90,in=90] (2,1) to [out=-90,in=90] (1,0) to (1,-.5) to [out=-90,in=-90] (0,-.5) to (0,2.5);
    \strand [thick] (3,2) to [out=90,in=90] (2,2) to [out=-90,in=90] (1,1) to [out=-90,in=90] (2,0) to [out=-90,in=-90] (3,0) to (3,2);
    \end{knot}
\end{tikzpicture}
\rightarrow
\begin{tikzpicture}[anchorbase,scale=.8]
  \begin{knot}[
            clip width=7,
            consider self intersections,
            end tolerance=1pt,
            flip crossing=1,
    ]
    \strand [thick]  (0,2.7) -- (1,2.3) to (1,2) to [out=-90,in=90] (2,1) to [out=-90,in=90] (1,0) to (1,-.3) -- (0,-.7) to (0,2.7);
    \strand [thick] (3,2.4) -- (2,2) to [out=-90,in=90] (1,1) to [out=-90,in=90] (2,0) -- (3,-.4) to (3,2.4);
    \end{knot}
\end{tikzpicture}
\]
and it gets assigned the following element of $\End_{\dot{\calS}_q(1,m)}(\1_{[m,0,0,m]})$:
\begin{align*}
E_1F_3(q^{-1}-F_2E_2)^2E_3F_1\1_{[m,0,0,m]}=&q^{-2}E_1F_3E_3F_1\1_{[m,0,0,m]}\\ & -2q^{-1}E_1F_3F_2E_2E_3F_1\1_{[m,0,0,m]}\\ &+E_1F_3F_2E_2F_2E_2E_3F_1\1_{[m,0,0,m]}
\end{align*}
We can simplify this one term at a time:
\[
E_1F_3E_3F_1\1_{[m,0,0,m]}=F_3E_3E_1F_1\1_{[m,0,0,m]}
\]
which using the same computations as in Example \ref{ex:unknot} simply reduces to $[m]^2$.
Now, the last term actually reduces to the middle one:
\begin{align*}
E_1F_3F_2E_2F_2E_2E_3F_1\1_{[m,0,0,m]}&=E_1F_3F_2E_2F_2\1_{[m-1,2,0,m-1]}E_2E_3F_1\1_{[m,0,0,m]} \\
&= E_1F_3F_2F_2E_2\1_{[m-1,2,0,m-1]}E_2E_3F_1\1_{[m,0,0,m]} \\
&\; +E_1F_3F_2\frac{K_2-K_2^{-1}}{q-q^{-1}}\1_{[m-1,2,0,m-1]}E_2E_3F_1\1_{[m,0,0,m]} \\
\end{align*}
The first part of the result dies because $E_2\1_{[m-1,2,0,m-1]}=\1_{[m-2,3,-1,m-1]}E_2=0$, and the remaining part reduces to:
\[[2]E_1F_3F_2E_2E_3F_1\1_{[m,0,0,m]}\]
We are left with the following computation:
\begin{align*}
  ([2]-2q^{-1})E_1F_3F_2E_2E_3F_1\1_{[m,0,0,m]} &=  (q-q^{-1})F_3F_2E_1F_1\1_{[m,1,0,m-1]}E_2E_3 \\
  &= F_3F_2F_1\1_{[m+1,0,0,m-1]}E_1+[m-1]F_3F_2E_2\1_{[m,0,1,m-1]}E_3
\end{align*}
The first term above dies if we let it act on:
\[
  {\bigwedge_q}^m(\C_q^m)\otimes{\bigwedge_q}^0(\C_q^m)\otimes{\bigwedge_q}^0(\C_q^m)\otimes{\bigwedge_q}^m(\C_q^m)
  \]
  as it factors through
\[
  {\bigwedge_q}^{m+1}(\C_q^m)\otimes{\bigwedge_q}^0(\C_q^m)\otimes{\bigwedge_q}^0(\C_q^m)\otimes{\bigwedge_q}^m(\C_q^{m-1})=\{0\}
  \]
  Note that this is very specific to the $\slnn{m}$ case!
  Then:
  \begin{align*}
    ([2]-2q^{-1})E_1F_3F_2E_2E_3F_1\1_{[m,0,0,m]} &=  (q-q^{-1})[m-1]F_3E_3\1_{[m,0,0,m]}\\
    &=(q-q^{-1})[m][m-1]\1_{[m,0,0,m]}
  \end{align*}
  Finally, one computes that the $\slnn{m}$ invariant of the Hopf link equals:
  \[q^{-2}[m]^2+ (q-q^{-1})[m][m-1]=[m](q[m-1]+q^{-m-1})\]
\end{example}

Surely, all of these computations could have been performed directly in the Brauer category. However, in addition to the arguments in favor of the Schur algebras given in the next paragraph, the idea of using PBW algorithm or similar in the context of Schur algebras, or similar algebra-originating algorithmic processes, might be an interesting computational tool.

\subsection{Restrictions and how to overcome them}

First of all, the process that turns a tangle diagram into something that corresponds to an element in the Schur algebra is a bit unsatisfactory, especially if one is willing to go to the categorical level, as it somehow consists in reversing some of the orientations, which is terrible for functoriality purposes. Perhaps more importantly, the process dramatically fails if $V^\ast\notin \Kar(\catRep^+)$, which happens for example in the super case. Similarly, in the HOMFLY-PT case, one even lacks the representation category to start from.

These issues motivated the introduction with Antonio Sartori of the notion of doubled Schur algebra \cite{QS2}, which is based on skew Howe duality with duals, and is closely related to the Brauer category at generic parameter $\beta$, thus giving a quantum description of the HOMFLY-PT polynomial. This approach was later categorified by Naisse and Vaz~\cite{NaisseVaz}.

Another interesting question, very much related to the kind of categories that Reshetikhin and Turaev have used to pass from the 3-sphere to more general 3-manifolds, is to replace exterior powers by symmetric powers. Answers to these questions can be found in~\cite{RTub,TVW}. In many of these cases though, and especially if one goes to the super case, the generator and relation description of what replaces the Schur category is still hard to handle.

\bibliographystyle{plain}

\end{document}